\numberwithin{equation}{section}
\theoremstyle{plain}
\newtheorem{thm}{Theorem}[section]
\newtheorem{prop}[thm]{Proposition}
\newtheorem*{GPconj}{Gan--Gross--Prasad conjecture}
\newtheorem*{main}{Main Theorem}
\theoremstyle{definition}
\theoremstyle{remark}
\newtheorem{rem}[thm]{Remark}
\def\disc{\operatorname{disc}}
\def\Hom{\operatorname{Hom}}
\def\Irr{\operatorname{Irr}}
\def\N{\operatorname{N}}
\def\Tr{\operatorname{Tr}}
\def\As{\mathrm{As}}
\def\GL{\mathrm{GL}}
\def\SL{\mathrm{SL}}
\def\U{\mathrm{U}}
\def\WD{\mathit{WD}}
\def\CC{\mathbb{C}}
\def\ZZ{\mathbb{Z}}
\def\1{\Eins}
\def\varddots{\mathinner{\mkern1mu
    \raise\p@\hbox{.}\mkern2mu\raise4\p@\hbox{.}\mkern2mu
    \raise7\p@\vbox{\kern7\p@\hbox{.}}\mkern1mu}}
\newcommand{\BIGOP}[1]{\mathop{\mathchoice%
{\raise-0.22em\hbox{\huge $#1$}}%
{\raise-0.05em\hbox{\Large $#1$}}{\hbox{\large $#1$}}{#1}}}
\newcommand{\BIGboxplus}{\mathop{\mathchoice%
{\raise-0.35em\hbox{\huge $\boxplus$}}%
{\raise-0.15em\hbox{\Large $\boxplus$}}{\hbox{\large $\boxplus$}}{\boxplus}}}
\title{The local Gan-Gross-Prasad conjecture for $U(3) \times U(2)$ : the non-generic case}
\author{Jaeho Haan}
\address{Algebraic Structure and its Applications Research Center(ASARC), Department
of Mathematics, Korea Advanced Institute of Science and Technology}
\email{jaehohaan@gmail.com}
\keywords{Gan-Gross-Prasad conejcture, non-tempered Arthur packet, unitary groups, local theta correspondence, $\epsilon$-factor\\ \noindent This work was supported by the National Research Foundation of Korea(NRF) grant funded by the Korea government(MSIP)(ASARC, NRF-2007-0056093).
}
\date{\today}
\begin{document}

\begin{abstract}In this paper, we investigate the local Gan-Gross-Prasad conjecture for some pair of representations of $U(3)\times U(2)$ involving a non-generic representation. For a pair of generic $L$-parameters of $(U(n),U(n-1))$, it is known that there is a unique pair of representations in their associated Vogan $L$-packets which produces the unique Bessel model of these $L$-parameters. We showed that this is not ture for some pair of $L$-parameters involving a non-generic one.\\ On the other hand, we give the precise local theta correspondence for $(U(1),U(3))$ not at the level of $L$-parameters but of individual representations in the framework of the local Langlands correspondence for unitary group. As an applicaiton of these results, we prove an analog of Ichino-Ikeda conejcture for some non-tempered case. 
The main tools in this work are the see-saw identity, local theta correspondence for (almost) equal rank cases and recent results on the local Gan-Gross-Prasad conjecture both on the Fourier-Jacobi and the Bessel case.

\end{abstract}
\maketitle
\section{\textbf{Introduction}}Let $E/F$ be a quadratic extension of number fields. Let $G=U(3)$ be the unitary group relative to $E/F$. In \cite{Ro}, Rogawski has defined a certain enlarged class of $L$-packet, or $A$-packet, using the endoscopic transfer of a one-dimensional character of the $H=U(2) \times U(1)$, which is the unique elliptic endoscopic group for $G$. 

More precisely, let $\varrho= \otimes_v \varrho_v$ be a one-dimensional automorphic character of $H$. An $A$-packet $\Pi(\varrho) \simeq \otimes \Pi(\varrho_v)$ is the transfer of $\varrho$ with respect to functoriality for an embedding of $L$-groups $\xi : ^LH \to ^LG$. Then for all place $v$ of $F$, $\Pi(\varrho_v)$ contains a certain non-tempered representation $\pi^{n}(\varrho_v)$ and for a place which remains prime in $E$, it has an additional supercuspidal representation $\pi^{s}(\varrho_v)$. Gelbart and Rogawski \cite{Ge1} showed that the representations in this $A$-packet arise in the Weil representation of $G$. The main purpose of this article is to study the branching of the representations in this $A$-packet when restricted to $U(2)$ over local fields.

For the branching problem of codimension 1 classical groups, there is the so called \emph{Gan-Gross-Prasad} (GGP) conjecture, which was first formulated by Gross and Prasad for orthogonal groups and later, together with Gan, they extended it to all classical groups in \cite{Gan2}. But this paper mainly concerns unitary groups, we shall state the GGP conjecture only for unitary groups.

Let $E/F$ be a quadratic extension of local fields of characteristic zero. Let $V_{n+1}$ be a Hermitian space of dimension $n+1$ over $E$ and $W_n$ a skew-Hermitian space of dimension $n$ over $E$.
Let $V_n \subset V_{n+1}$  be a nondegenerate subspace of codimension $1$,  so that if we set 
\[  G_n =  \U(V_n) \times \U(V_{n+1}) \quad \text{or} \quad \U(W_n) \times \U(W_n) \]
and
\[   H_n = \U(V_n) \quad \text{or} \quad \U(W_n), \]
then we have a diagonal embedding
\[ \Delta:  H_n \hookrightarrow G_n. \]

Let $\pi$ be an irreducible smooth representation of $G_n$. In the Hermitian case, one is interested in computing 
\[  \dim_\CC \Hom_{\Delta H_n} ( \pi, \CC). \]
We shall call this the \emph{Bessel} case (B) of the GGP conjecture.
To describe the GGP conjecture for the skew-Hermitian case, we need another piece of data: a Weil representation $\omega_{\psi, \chi, W_n}$, where $\psi$ is a nontrivial additive character of $F$ and $\chi$ is a character of $E^{\times}$ whose restriction to $F^{\times}$ is the quadratic character $\omega_{E/F}$ associated to $E/F$ by local class field theory.
Then  one is interested in computing
\[  \dim_\CC \Hom_{\Delta H_n} ( \pi, \omega_{\psi,\chi, W_n}). \]
We shall call this the \emph{Fourier--Jacobi} case (FJ) of the GGP conjecture. To unify notation, we shall let $\nu = \CC$ or $\omega_{\psi,\chi, W_n}$ in the respective cases.

By the result of \cite{agrs}, it is known \[  \dim_\CC \Hom_{\Delta H_n} ( \pi, \nu ) \le 1 \] and so we want to specify irreducible smooth representations $\pi$ such that $$\Hom_{\Delta H_n} ( \pi, \nu ) = 1. $$

In \cite{Gan2}, Gan, Gross and Prasad has brought this problem into a more general setting using the notion of relevant pure inner forms of $G_n$ and Vogan $L$-packets. A pure inner form of $G_n$ is a group of the form 
\[  G_n' = \U(V_{n+1}') \times \U(V'_{n}) \quad \text{or} \quad \U(W'_n) \times \U(W''_n) \]
where $V_{n}' \subset V_{n+1}'$ are $n$ and $n+1$ dimensional hermitian spaces over $E$ and $W_n'$ is a $n$-dimensional skew-hermitian spaces over $E$.

\noindent Furthermore, if \[  \quad  V_{n+1}'/V_{n}' \cong V_{n+1}/V_{n} \quad \text{or} \quad W_n'=W_n'', \] we say that $G_n'$ is a relevant pure inner form of $G_n$.\\ (Indeed, there are four pure inner forms of $G_n$ and among them, only two are relevant.)

If $G_n'$ is relevant, we set
\[   H'_n = \U(V'_n) \quad \text{or} \quad \U(W'_n) \]
so that we have a diagonal embedding
\[ \Delta:  H'_n \hookrightarrow G'_n. \]

Suppose that $\phi$ is an $L$-parameter for the group $G_n$. Then the (relevant) Vogan $L$-packet $\Pi_{\phi}$ associated to $\phi$  consists of certain irreducible smooth representations of $G_n$ and its (relevant) pure inner forms $G_n'$ whose $L$-parameter is $\phi$. We denote the relevant Vogan $L$-packet of $\phi$ by $\Pi^R_{\phi}$.

Now we can loosely state the GGP conjecture  as follows:

\begin{GPconj}
For a generic $L$-parameter $\phi$ of $G_n$, the followings hold:
\begin{enumerate}
\item $\sum_{\pi' \in \Pi^R_{\phi}}\dim_{\CC}\Hom_{\Delta H'_n} ( \pi', \nu )=1.$

\item Using the local Langlands correspondence for unitary group, we can pinpoint $\pi' \in \Pi^R_{\phi}$ such that $$\dim_{\CC}\Hom_{\Delta H'_n} ( \pi', \nu )=1.$$ 
\end{enumerate}
\end{GPconj}

For tempered $L$-parameter $\phi$, Beuzart-Plessis \cite{bp1},\cite{bp2},\cite{bp3} established (B) of the GGP conjecture. Building upon Plessis's work, Gan and Ichino \cite{iw} proved (FJ) for tempered case first by establishing the precise local theta correspondence for almost equal rank unitray groups and then extended both (B) and (FJ) to generic cases. We shall elaborate more on this in \S \ref{gen}.

In this paper, we shall investigate conjecture (B) for $G_2$ when an $L$-parameter of $G_2$ involves some non-generic $L$-parameter of $U(V_3)$. More precisely, we have

\begin{main}Denote the $L$-parameter of the non-generic representation $\pi^{n}(\varrho_v)$ appearing in the non-tempered $A$-packet $\Pi(\varrho_v)$ by $\phi^{n}$. Let $\phi^t$ be a tempered $L$-parameter of $U(V_2)$ and $\phi=\phi^n \otimes \phi^t$ the $L$-parameter of $G_2=U(V_3) \times U(V_2)$.
Then

\begin{enumerate}
\item If the $L$-parameter $\phi^t$ does not come from the theta lift of $U(W_1)$, $$\sum_{\pi' \in \Pi^R_{\phi}}\dim_{\CC}\Hom_{\Delta H'_2} ( \pi', \CC )=0$$
\item Suppose that $\phi^t$ comes from the theta lift of $U(W_1)$. Then $$\sum_{\pi' \in \Pi^R_{\phi}}\dim_{\CC}\Hom_{\Delta H'_2} ( \pi', \CC )=1$$ and using the local Langlands correspondence, we can explicitly describe $\pi' \in \Pi^R_{\phi}$ such that \begin{equation}\label{p}\dim_{\CC}\Hom_{\Delta H'_2} ( \pi', \CC )=1.\end{equation}
\end{enumerate}

\end{main}

The main idea for this is to consider the following see-saw diagram:
\[
 \xymatrix{
  \U(W_1)  \times \U(W_1)  \ar@{-}[dr] \ar@{-}[d] & \U(V_{3}) \ar@{-}[d] \\
  \U(W_1) \ar@{-}[ur] &  \U(V_2) \times \U(V_{1})}.
\]

Write $\pi=\pi_3 \otimes \pi_2$ in $\Pi^R_{\phi}$ where $\pi_3\in \Pi_{\phi^n}$ and $\pi_2 \in \Pi_{\phi^t}$.
Since all elements in the $L$-packet $\Pi_{\phi^n}$ can be obtained by the theta lift from $U(W_1)$, we can write $\pi_3=\Theta_{\psi,\chi,V_3,W_1}(\sigma)$ where $\sigma$ is an irreducible smooth character of $U(W_1)$ and $\psi,\chi$ are some characters needed to fix a relevant Weil representation. By the see-saw identity, we have 
$$\Hom_{U(V_2)}(\Theta_{\psi,\chi,V_3,W_1}(\sigma)\otimes \pi_2, \mathbb{C})  \simeq \Hom_{U(V_2)}(\Theta_{\psi,\chi,V_3,W_1}(\sigma),\pi^{\vee}_2) \simeq \Hom_{U(W_1)}(\Theta_{\psi,\chi,W_1,V_2}(\pi^{\vee}_2) \otimes \omega_{\psi,\chi,W_1,V_1},\sigma)$$ where $\pi^{\vee}_2$ is the contragredient representation of $\pi_2$. 

\noindent From this, we see that for having $\Hom_{U(V_2)}(\Theta_{\psi,\chi,V_3,W_1}(\sigma)\otimes \pi_2, \mathbb{C})\ne 0$, we must have $\Theta_{\psi,\chi,W_1,V_2}(\pi^{\vee}_2)\ne 0$, i.e., $\pi^{\vee}_2$ should be the theta lift from $U(W_1)$. This accounts for (i) in the \textbf{Main Theorem}. 

\begin{rem}\label{rem1.1}As we shall see later, the $L$-parameter $\phi^n$ is not only non-tempered but also non-generic. Thus (i) says that the GGP conjecture without the genericity hypothesis is not true.
\end{rem}

If $\Theta_{\psi,\chi,W_1,V_2}(\pi^{\vee}_2)\ne 0$, then we can apply (FJ) for $U(W_1)$ to find the precise representations in the Vogan $L$-packet associated to the $L$-parameters of $\Theta_{\psi,\chi,W_1,V_2}(\pi^{\vee}_2)$ and $\sigma$. However, to find a representation $\pi'$ in (\ref{p}), we need to know the precise local theta correspondence between $(U(W_1),U(V_3))$ as well as $(U(W_1),U(V_2))$. For the precise local theta correspondence for codimension 0 and 1 cases, it was suggested and proved by Gan and Ichino in \cite{iw} and that of $(U(W_1),U(V_2))$ immediately follows from it. In Theorem \ref{thm1}, we suggest and prove the local theta correspondence for $(U(W_1),U(V_3))$ as our second main result, which is not investigated so far in general.\begin{rem}For orthogonal group, such restriction problem for non-tempered $L$-parameter was already dealt in \cite{gg} and \cite{na}. Thus this paper can be seen as an analog of their works in the unitary group.
\end{rem}

As an applicaition of our \textbf{Main Theorem}, we could establish the Ichino-Ikeda conjecture of the unitary group for the non-tempered case. In \cite{Ich}, Ichino and Ikeda defined the local period for a pair of tempered representations of orthogonal group using the matrix coefficients and conjectured that its non-vanishing would be equivalent to the existence of its Bessel model. This conjecture was settled by Waldspuger for the non-archimedean case, and Sakellaridis, Venkatesh \cite[\S 6.4]{SV} and Plessis \cite[\S 14.3]{bp2} established this conjecture in the setting of unitary group independently. However, if one considers a pair involving a non-tempered representation, the local period may diverge and so one needs to regularise it. In \cite{Haan}, the author introduced the regularized local period $\mathcal{P}_v^{reg}$ for some special pair of non-tempered representations. With this notion of regularised local period, we could prove \begin{equation}\label{ii}\Hom_{U(V_{2,v})}(\pi_{3,v},\pi_{2,v})\ne0 \Leftrightarrow \mathcal{P}^{reg}_v \ne 0 \quad  \text{for all non-archimedean places} \ v \text{ of } F\end{equation} when
$ \pi_{3,v}=\Theta_{\psi,\chi,V_{3,v},W_{1,v}}(\overline{\sigma_v})$, $\pi_{2,v}=\Theta_{\psi,\chi,V_{3,v},W_{1,v}}(\overline{\mathbb{I}_v})$ are local theta lifts for some irreducible and trivial representations $\overline{\sigma_v}, \overline{\mathbb{I}_v}$ of $U(W_{1,v})$ respectively. (For a complex representation $\pi$ of a group $G$, we denote its complex conjugate representation by $\overline{\pi}$.) This justifies the definition of $\mathcal{P}_v^{reg}$ for regularised local period and as a corollary, we can express the main result of \cite{Haan} in the form of the original global Gross-Prasad conjecture under the assumption that (\ref{ii}) also holds for archimedean places.

\begin{thm}Let $\pi_{3}=\Theta_{\psi,\chi,V_{3},W_{1}}(\overline{\sigma})$ and $\pi_{2}=\Theta_{\psi,\chi,V_{3},W_{1}}(\overline{\mathbb{I}})$ be the global theta lifts of some automorphic character $\sigma$ and trivial chracter $\mathbb{I}$ of $U(W_{1,v})$ to $U(V_{3,v})$ and $U(V_{2,v})$, respectively.\\Assume (\ref{ii}) holds for archimedean cases. \\
If $\Hom_{U(V_{2,v})}(\pi_{3,v},\pi_{2,v})\ne 0$ for all places $v$ of $F$, then we have $$
\mathcal{P}\ne0 \Leftrightarrow L_E(\frac{1}{2},BC( \sigma)\otimes \chi)\ne 0$$ where $\mathcal{P}$ is the global period functional of $\pi_3 \times \pi_2$ defined by $$\mathcal{P}(f_{\pi_3},f_{\pi_2})=\int_{U(V_2)(F)\backslash U(V_2)(\mathbb{A}_F)}f_{\pi_3}(g) \overline{f_{\pi_2}(g)}dg \quad \text{ for } f_{\pi_3}\in \pi_3 \text{ and } f_{\pi_2}\in \pi_2.$$ 

\end{thm}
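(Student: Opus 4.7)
The theorem is essentially a packaging of the author's previous work \cite{Haan} through the equivalence (\ref{ii}) proved (non-archimedean case) as an application of the \textbf{Main Theorem} above. The plan is to invoke a product formula of Ichino--Ikeda type already established in \cite{Haan}, and then use (\ref{ii}) to eliminate the local regularized periods from the statement.

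\textbf{Step 1: Recall the regularized Ichino--Ikeda identity.} The main result of \cite{Haan} gives, for the non-tempered pair $(\pi_3,\pi_2)$ coming from theta lifts of $\overline{\sigma}$ and $\overline{\mathbb{I}}$ respectively, an identity of the shape
\[
|\mathcal{P}(f_{\pi_3},f_{\pi_2})|^2 \;=\; C \cdot L_E\!\left(\tfrac{1}{2},BC(\sigma)\otimes\chi\right) \cdot \prod_{v} \mathcal{P}^{reg}_v(f_{\pi_3,v},f_{\pi_2,v}),
\]
where $C$ is a nonzero constant involving ratios of special values away from the critical point, and the regularized local period $\mathcal{P}^{reg}_v$ is the object introduced in \cite{Haan} to handle the divergence of the naive local integral in the non-tempered regime. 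The first step of the proof is simply to quote this identity for vectors in the global representations $\pi_3$ and $\pi_2$.

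\textbf{Step 2: Transfer the Hom-hypothesis into non-vanishing of $\mathcal{P}^{reg}_v$.} For each non-archimedean place $v$, the equivalence (\ref{ii}) yields
\[
\mathcal{P}^{reg}_v \not\equiv 0 \iff \Hom_{U(V_{2,v})}(\pi_{3,v},\pi_{2,v}) \ne 0.
\]
For archimedean $v$, the same equivalence is assumed in the hypothesis. Hence the global hypothesis that $\Hom_{U(V_{2,v})}(\pi_{3,v},\pi_{2,v}) \ne 0$ at every place $v$ of $F$ translates into $\mathcal{P}^{reg}_v \not\equiv 0$ at every $v$. Because the Euler product in Step 1 is actually a finite product modulo ratios of $L$-values which are known to be nonzero at the critical point (standard Rankin--Selberg / Asai non-vanishing arguments, as used already in \cite{Haan}), we can find decomposable test vectors $f_{\pi_3} = \otimes_v f_{\pi_3,v}$ and $f_{\pi_2} = \otimes_v f_{\pi_2,v}$ such that $\prod_v \mathcal{P}^{reg}_v(f_{\pi_3,v},f_{\pi_2,v}) \ne 0$.

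\textbf{Step 3: Conclude the equivalence.} Combining Steps 1 and 2, the right hand side of the regularized Ichino--Ikeda identity is a nonzero multiple of $L_E(\tfrac{1}{2},BC(\sigma)\otimes \chi)$. Therefore
\[
\mathcal{P} \not\equiv 0 \iff L_E\!\left(\tfrac{1}{2},BC(\sigma)\otimes\chi\right)\ne 0,
\]
which is the desired statement. The ``$\Rightarrow$'' direction follows immediately from the identity, while the ``$\Leftarrow$'' direction uses the existence of the decomposable vectors selected in Step 2.

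\textbf{Expected main obstacle.} The substantive content of the argument is entirely contained in the regularized Ichino--Ikeda identity of \cite{Haan} and in (\ref{ii}); the present proof merely assembles them. The only genuine issue is the existence of decomposable test vectors making all local factors $\mathcal{P}^{reg}_v$ simultaneously nonzero: a priori (\ref{ii}) only gives the non-vanishing of the linear form, and one must check that the local factors appearing in the Euler product of Step 1 are exactly these linear forms (up to normalization by non-vanishing local $L$-factors), so that the product can be made nonzero by an independent choice of vectors at each place. This matching is precisely what the definition of $\mathcal{P}^{reg}_v$ in \cite{Haan} is designed to ensure, and it is the step that needs to be invoked with care.
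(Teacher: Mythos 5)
Your proposal is correct and coincides with the paper's own (very brief) treatment: the theorem is explicitly presented as a corollary of the main identity in \cite{Haan} together with the local equivalence~(\ref{ii}), and your three steps reconstruct precisely that reduction. Your cautionary note about verifying that the local factors in the Euler product agree with the linear forms governed by~(\ref{ii}) is the right thing to flag, and it is exactly what the definition of $\mathcal{P}^{reg}_v$ in \cite{Haan} is set up to guarantee.
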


The rest of the paper is organized as follows; In Section 2, we give a brief sketch of the local Langlands correspondence for unitary group. In Section 3, we collect some results of the local theta correspondence for unitary group which we shall use in the proof of our main results. In Section 4, we prove Theorem \ref{thm2} under the assumption of Theorem \ref{thm1} whose proof will occupy Section 5. Finally, we give an application of our main result in Section 6. 

\subsection*{Acknowledgements} The author expresses deep gratitude to his professor Haseo Ki, Dongho Byeon for their guidance, patience and constant encouragement during his whole graduate years. Without their huge influence on me, this paper would never come out this world. 
\\ Originally, this paper grew out as an attempt to understand Gan and Ichino's work \cite{iw}. Besides its large inspiration on this work, we are much indebted to professor Atsushi Ichino for his suggestion this problem and to professor Wee Teck Gan for helpful discussion at 2014 ICM Seoul. Taking this opportunity, we would like to express our sincere thanks to them. Part of this work was completed during the stay at KAIST and so we are grateful for their hospitality and providing a wonderful place to conduct a research. Finally, the author would like to thank the referee for his careful reading on the manuscript and numerous corrections and helpful suggestions.
\subsection{Notation}\label{not}We fix some notations we shall use throughout this paper:
\begin{itemize}
\item  $E/F$ is a quadratic extension of number fields or local fields of characteristic zero.

\item  $c$ is the non-trivial element of Gal$(E/F)$.

\item $\text{Fr}_E$ is a Frobenius element of Gal$(\bar{E}/E)$.

\item  Denote by $\Tr_{E/F}$ and $\N_{E/F}$ the trace and norm maps from $E$ to $F$.

\item $E^1:=\{x\in E\ | \ \N_{E/F}(x)=1\}$ 

\item $\delta$ is an element of $E^{\times}$ such that  $\Tr_{E/F}(\delta)=0$.

\item Let $\psi$ be an additive character of $\mathbb{A}_F/F$ or $F$ and define $$\psi^E(x)  := \psi(\frac{1}{2}\Tr_{E/F}(\delta x)) \quad \text{and} \quad \psi^E_2(x)  := \psi(\Tr_{E/F}(\delta x)).$$

\item  Let $\omega_{E/F}$ be the quadratic character assosiated to $E/F$ by global or local class field theory and let $\chi$ be a character of $\mathbb{A}_E^{\times}/E^{\times}$ or $E^{\times}$ whose resriction to $\mathbb{A}_F^{\times}/F^{\times}$ of $F^{\times}$ is $\omega_{E/F}$.

\item For an linear algebraic group $G$, denote its $F$-points by $G(F)$.

\item $\mathbb{I}_G$ denotes the trivial representation of $G$.
\end{itemize}

\section{\textbf{Local Langlands correspondence for unitary group}}
By the recent work of Mok \cite{Mok}, and Kaletha-M\'inguez-Shin-White \cite{kmsw}, the local Langlands correpondence is now known for unitary group conditional on the stabilization of the twisted trace formula. Since our main results are expressed using the local Langlands correspondence, we shall assume the local Langlands correspondence for unitary group throughout the paper. In this section, we list some of its properties which we shall use later. Indeed, much part of this section are excerpts from Section 2 in \cite{iw}.

\subsection{Hermitian and skew-Hermitian spaces}
Until Section 5, we will use the symbol $E/F$ to denote the quadratic extension of local fields of characteristic zero.
For $\varepsilon \in \{\pm1\}$, let $V$ be a finite $n$-dimensional vector space over $E$ equipped with a nondegenerate $\varepsilon$-Hermitian $c$-sesquilinear form $\langle \cdot, \cdot \rangle_V : V \times V \rightarrow E$.
That means for $v, w \in V$ and $a, b \in E$, we have \[
 \langle a v, b w \rangle_V = a b^c \langle v, w \rangle_V, \qquad
 \langle w, v \rangle_V = \varepsilon \cdot \langle v, w \rangle_V^c.
\]

\noindent Denote $\disc V = (-1)^{(n-1)n/2} \cdot \det V$, so that 
\[
 \disc V \in
 \begin{cases}
  F^{\times} / \mathrm{N}_{E/F}(E^{\times})
  & \text{if $\varepsilon = +1$;} \\
  \delta^n \cdot F^{\times} / \mathrm{N}_{E/F}(E^{\times})
  & \text{if $\varepsilon = -1$.}
 \end{cases}
\]
Then we can define $\epsilon(V) = \pm 1$ by 
\begin{equation}\label{sign}
 \epsilon(V) = 
 \begin{cases}
  \omega_{E/F}(\disc V) & \text{if $\varepsilon = +1$;} \\
  \omega_{E/F}(\delta^{-n} \cdot \disc V) & \text{if $\varepsilon = -1$.}
 \end{cases}
\end{equation}
By a theorem of Landherr, for a given positive integer $n$, there are exactly two isomorphism classes of hermitian spaces of dimension $n$ distinguished by $\epsilon(V)$.
Let $\U(V)$ be the unitary group of $V$ defined by
\[
  \U(V) = \{ g \in \GL(V) \, | \,
 \text{$\langle g v, g w \rangle_V =  \langle v, w \rangle_V$ for $v, w \in V$}
 \}.
\]
Then $U(V)$ turns out to be a connected reductive algebraic group defined over $F$.
\subsection{$L$-parameters and component groups}

Let $I_F$ be the inertia subgroup of $\text{Gal}(\bar{F}/F)$. Let $W_F=I_F \ltimes \langle \text{Fr}_F \rangle $ be the Weil group of $F$ and $\WD_F = W_F \times \SL_2(\CC)$ the Weil-Deligne group of $F$.
For a homomorphism $\phi: \WD_F \rightarrow \GL_n(\CC)$, we say that it is a representation of $\WD_F$ if 
\begin{enumerate}
\item $\phi$ is trivial on an open subgroup of $I_F$,
 \item $\phi$ is continuous and $\phi(\text{Fr}_F)$ is semisimple, 
 \item the restriction of $\phi$ to $\SL_2(\CC)$ is induced by a morphism of algebraic groups $\SL_2 \to \GL_n$
\end{enumerate}
For a representation $\phi$ of $\WD_F$, we say that $\phi$ is tempered when the image of $W_F$ is bounded.\\
Define $\phi^{\vee}$ by $\phi^\vee(w) = {}^t\phi(w)^{-1}$ and call this the contragredient representation of $\phi$. For $s \in W_F \smallsetminus W_E$, we define a representation $\phi_s^c$ of $\WD_E$ by $\phi_s^c(w) = \phi(sws^{-1})$.
It is known that the equivalence class of $\phi_s^c$ is independent of the choice of $s$. As we are only interested in the equivalence classes of representations, we shall denote $\phi_s^c$ by $\phi^c$ suppressing $s$. Then we say that $\phi$ is conjugate self-dual if there is an isomorphism $b: \phi \mapsto (\phi^{\vee})^{c}$ and for $\varepsilon=\pm 1$, we say that $\phi$ is conjugate self-dual with sign $\varepsilon$ if $(b^{\vee})^c=\varepsilon \cdot b$.

Let $V$ be an $n$-dimensional $\varepsilon$-hermitian space over $E$. An $L$-parameter for the unitary group $\U(V)$ is a homomorphism $$\phi:\WD_F \longrightarrow ^LU(V)=GL_{n}(\CC) \rtimes \text{Gal}(E/F)$$ such that 
\begin{itemize}
\item the composite of $\phi$ with the projection onto $GL_{n}(\CC)$ is a representation of $WD_F$
\item the composite of $\phi$ with the projection onto Gal$(E/F)$ is the natural projection of $\WD_F$ to Gal$(E/F)$.

\end{itemize}

The following proposition in \cite[\S 8]{Gan2} enables us to remove the cumbersome Gal$(E/F)$-factor in the definition of $L$-parameter of $U(V)$.

\begin{prop}Restriction to $W_E$ of $W_F$ in $\WD_F$ gives a bijection between the set of $L$-parameters for $U(V)$ and the set of equivalence classes of conjugate self-dual representations $$\phi:\WD_E \longrightarrow GL_n(\CC)$$ 
of sign $(-1)^{n-1}$.
\end{prop}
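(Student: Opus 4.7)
The plan is to establish the bijection by constructing the inverse explicitly and checking compatibility with the conjugate self-dual structure. First, I observe that restriction is well-defined: given an $L$-parameter $\phi : \WD_F \to GL_n(\CC) \rtimes \text{Gal}(E/F)$, the subgroup $W_E \subset W_F$ maps to the trivial element of $\text{Gal}(E/F)$, so the restriction $\phi_E := \phi|_{\WD_E}$ takes values in $GL_n(\CC)$ and is a representation of $\WD_E$ in the usual sense.

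To see that $\phi_E$ is conjugate self-dual with the correct sign, I would fix $s \in W_F \setminus W_E$ and write $\phi(s) = A \rtimes c$ for some $A \in GL_n(\CC)$. Recall that the $L$-group action of $c$ on $GL_n(\CC)$ is $g \mapsto \theta(g) := J_n \cdot {}^tg^{-1} \cdot J_n^{-1}$ for a fixed matrix $J_n$ with $J_n \cdot {}^tJ_n^{-1} = (-1)^{n-1} I_n$. For $w \in W_E$, conjugation in the semidirect product gives
\[
\phi_E(sws^{-1}) = A \cdot \theta(\phi_E(w)) \cdot A^{-1} = (AJ_n) \cdot {}^t\phi_E(w)^{-1} \cdot (AJ_n)^{-1},
\]
so the element $B := AJ_n$ realizes an isomorphism $b : \phi_E \to (\phi_E^{\vee})^c$. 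The sign is computed by evaluating $\phi(s^2)$ in two ways: as a product in the $L$-group, $\phi(s^2) = A \cdot \theta(A) = A J_n {}^tA^{-1} J_n^{-1}$, and as $\phi_E(s^2)$ directly; combining these equalities with the defining relation for $J_n$ forces $(b^{\vee})^c = (-1)^{n-1} \cdot b$.

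For surjectivity, I would reverse the construction. Given a conjugate self-dual $\phi_E : \WD_E \to GL_n(\CC)$ of sign $(-1)^{n-1}$, pick an intertwiner $B : \phi_E \xrightarrow{\sim} (\phi_E^{\vee})^c$ and set $A = BJ_n^{-1}$. The sign hypothesis on $B$ translates, via the identity $J_n \cdot {}^tJ_n^{-1} = (-1)^{n-1} I_n$, into the equality $A \cdot \theta(A) = \phi_E(s^2)$, which is precisely the compatibility needed to define
\[
\phi(w) = \phi_E(w) \quad (w \in W_E), \qquad \phi(sw) = (A \rtimes c)\phi_E(w) \quad (w \in W_E).
\]
A direct check shows this is a homomorphism, and one extends by the $SL_2(\CC)$-piece trivially on the $\text{Gal}(E/F)$-factor. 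For injectivity, two $L$-parameters with the same restriction differ by conjugation by an element of ${}^LU(V)$; unwinding the semidirect product description shows such a conjugation comes from $GL_n(\CC) \cdot \text{Cent}(\phi_E)$, giving equivalence of the original parameters.

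The main obstacle is the sign calculation: one must fix conventions for the $L$-group action and for the matrix $J_n$ coherently, and verify that the discrepancy between $B \cdot {}^tB^{-1}$ and the scalar expected from the conjugate self-dual axiom is exactly $(-1)^{n-1}$. Once this sign bookkeeping is done, the rest of the argument is a formal unwinding of the semidirect product, and the bijection follows.
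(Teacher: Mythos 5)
The paper does not actually prove this proposition; it is cited from Gan--Gross--Prasad \cite[\S 8]{Gan2}. Your proof follows the same strategy as GGP, namely describing the bijection by fixing $s \in W_F \setminus W_E$, writing $\phi(s) = A \rtimes c$, and tracking the data $(\phi_E, A)$ against the pair (conjugate self-dual representation, intertwiner), so there is no divergence in approach to report. However, two points in your write-up need attention.

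First, the intertwiner is misidentified. From the equation you derive, namely $\phi_E(sws^{-1}) = (AJ_n)\,{}^t\phi_E(w)^{-1}\,(AJ_n)^{-1}$, what you have actually shown is that $B = AJ_n$ intertwines $\phi_E^\vee$ with $\phi_E^c$. Taking transpose-inverses of both sides gives ${}^t\phi_E(sws^{-1})^{-1} = {}^tB^{-1}\,\phi_E(w)\,{}^tB$, so the map $b : \phi_E \to (\phi_E^\vee)^c$ required by the definition in \S 2.2 is $b = {}^tB^{-1}$, not $B$ itself. This is not a cosmetic slip: the sign condition $(b^\vee)^c = \varepsilon\, b$ involves an identification of $(\phi_E^c)^c$ with $\phi_E$ via conjugation by $\phi_E(s^2)$, and getting the direction of $b$ wrong makes it impossible to unwind that identification correctly.

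Second, and more substantially, the sign calculation --- which is the entire content of the proposition once the formal dictionary is set up --- is not carried out; you flag it as ``the main obstacle'' and then assert the conclusion. Carrying out your own suggested computation yields $\phi_E(s^2) = A\theta(A) = (-1)^{n-1} B\,{}^tB^{-1}$, i.e. $B = (-1)^{n-1}\phi_E(s^2)\,{}^tB$, which is the matrix identity that must be matched against the condition $(b^\vee)^c = \varepsilon b$ after unwinding the $\phi_E(s^2)$-twist; this translation is precisely the bookkeeping you leave to the reader. Similarly, the injectivity argument should distinguish between restrictions that are equal and restrictions that are merely equivalent: after conjugating by an element of $GL_n(\CC)$ to make the restrictions agree, you must check that the resulting relation $A_1 = A_2 z$ with $z$ centralizing $\theta\circ\phi_E$, together with the constraint $A_1\theta(A_1)=A_2\theta(A_2)=\phi_E(s^2)$, forces $\phi_1$ and $\phi_2$ to be $\widehat{G}$-conjugate. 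As written, the proposal is a correct outline of the standard GGP argument, but the sign verification and the injectivity step are real gaps that would need to be filled to have a complete proof.
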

With this proposition, henceforth, we shall mean an $L$-parameter for $\U(V)$ by $n$-dimensional conjugate self-dual representation $\phi$ of $\WD_E$ with sign $(-1)^{n-1}$.

Given an $L$-parameter $\phi$ of $U(V)$, we can write $\phi$ as a direct sum
\[  \phi = \bigoplus_i m_i \phi_i \]
with pairwise inequivalent irreducible representations $\phi_i$ of $\WD_E$ with multiplicities $m_i$.
We say that $\phi$ is square-integrable if for all $i$, $m_i=1$ and $\phi_i$ is conjugate self-dual with sign $(-1)^{n-1}$.

Given an $L$-parameter $\phi$ for $\U(V)$, we can associate its component group $S_{\phi}$.
As explained in \cite[\S 8]{Gan2}, $S_{\phi}$ is a finite 2-abelian group and it has a form
\[  S_{\phi}  = \prod_j  (\ZZ / 2\ZZ) a_j \]
with a canonical basis $\{ a_j \}$, where the product ranges over all $j$ such that $\phi_j$ is conjugate self-dual with sign $(-1)^{n-1}$.
If we denote the image of $-1 \in \GL_n(\CC)$ in $S_\phi$ by $z_\phi$, it is known that\[
 z_{\phi} = (m_j a_j) \in \prod_j  (\ZZ / 2\ZZ) a_j.
\]

\subsection{Local Langlands correspondence for unitary group}

The aim of the local Langlands correspondence for unitary groups is to classify the irreducible smooth representations of unitary groups. To state it, we first introduce some notations.
\begin{itemize}
\item Let $V^+$ and $V^-$ be the $n$-dimensional $\varepsilon$-Hermitian spaces with $\epsilon(V^+) = +1$, $\epsilon(V^-) = -1$ respectively.
\item For an $L$-parameter $\phi$ of $U(V^{\pm})$, let $\Pi_{\phi}$ be the Vogan $L$-packet associated to $\phi$, which is a finite set of irreducible smooth representations of $U(V^{\pm})$.
\item Let $\Irr(\U(V^{\pm}))$ be the set of irreducible smooth representations of $\U(V^{\pm})$.

\end{itemize}

Then the local Langlands correspondence in a form suggested by Vogan \cite{v}, says that there is one-to-one correspondence between  
\[  \Irr(\U(V^+)) \sqcup \Irr(\U(V^-))   \longleftrightarrow  \bigsqcup_{\phi}  \Pi_{\phi}, \]
where $\phi$ on the right-hand side runs over all equivalence classes of $L$-parameters for $\U(V^\pm)$.

\noindent Then under the local Langlands correspondence, we may also decompose $\Pi_{\phi}$ as 
\[  \Pi_{\phi}  = \Pi_{\phi}^+\sqcup   \Pi_{\phi}^-, \]
where for $\epsilon = \pm 1$, $\Pi_{\phi}^{\epsilon}$ consists of the representations of $\U(V^{\epsilon})$ in $\Pi_{\phi}$.

As explained in \cite[\S 12]{Gan2}, once an additive character $\psi:F \to \CC$ is chosen,  there is an associated bijection $$J^{\psi}(\phi):\Pi_{\phi} \to \Irr( S_{\phi}).$$ When $n$ is odd, this bijection is always canonical and so does not depend on the choice of $\psi$. However, when $n$ is even, it depends on the choice of an additive character of $\psi:F \to \CC^{\times}$. More precisely, such bijection is determined by the $\N_{E/F}(E^{\times})$-orbit of nontrivial additive characters 
\[
\begin{cases}
 \psi^E:E/F \rightarrow \CC^{\times} & \text{if $\varepsilon = +1$;} \\
 \psi:F \rightarrow \CC^{\times} & \text{if $\varepsilon = -1$}
\end{cases}
\] 
where \[  \psi^E(x) : = \psi(\tfrac{1}{2} \Tr_{E/F}(\delta x)) \]
Hereafter, if a nontrivial additive character $\psi:F \rightarrow \CC^{\times}$ is fixed, we use the notation $$J^{\psi}(\phi):\Pi_{\phi} \to \Irr( S_{\phi})$$ as above once and for all. 

The map $J^{\psi}$ enables us to label all irreducible smooth representations of $\U(V^{\pm})$ with $\pi(\phi,\eta)$ for some unique $L$-parameter $\phi$ of $U(V^{\pm})$ and $\eta \in \Irr( S_{\phi})$. Once an additive character $\psi:F \to \CC$ is fixed, we shall represent all irreducible smooth representations of $\U(V^{\pm})$ as $\pi(\phi,\eta)$ with the implicit use of $J^{\psi}$.

\subsection{Properties of the local Langlands correspondence}
\label{SS:llc}

We briefly list some properties of the local Langlands correspondence for unitary group, which we will use in this paper:

\begin{itemize}
\item
$\pi(\phi,\eta)$ is a representation of $\U(V^{\epsilon})$ if and only if $\eta(z_\phi)  = \epsilon$.

\item $\pi(\phi,\eta)$ is tempered if and only if $\phi$ is tempered.

\item $\pi(\phi,\eta)$ is square-integrable if and only if $\phi$ is square-integrable.

\item The component groups $S_{\phi}$ and $S_{\phi^{\vee}}$ are canonically identified. Under this canonical identification, if we write $\pi$ as $\pi(\phi,\eta)$, then the contragradient representation $\pi^{\vee}$ of $\pi$, has $L$-parameter $\phi^{\vee}$ and associated character $\eta^{\vee}=\eta \cdot \nu$ where \[  
\nu(a_j)  = \begin{cases}
\omega_{E/F}(-1)^{\dim \phi_j} & \text{if $\dim_{\CC} \phi$ is even;}  \\
1 & \text{if $\dim_{\CC} \phi$ is odd.} \end{cases} \]\\
This property follows from a result of Kaletha \cite[\S 4]{kel} for the case of unitary groups.
\item If $\phi$ is a non-tempered $L$-parameter, we can decompose 
\[
 \phi = \phi_1 \oplus \cdots \oplus \phi_r \oplus \phi_0
 \oplus (\phi_r^c)^\vee \oplus \cdots \oplus (\phi_1^c)^\vee,
\]
where
\begin{itemize}
 \item for $1\le i \le r$, $\phi_i$ is a $k_i$-dimensional representation of $\WD_E$ of the form $\phi_i = \phi_i' \otimes |\cdot|^{e_i}$ for some tempered representation $\phi_i'$ of $\WD_E$ and some real number $e_i$ such that
\[
 e_1 > \cdots > e_r > 0,
\]
 \item $\phi_0$ is a tempered $L$-parameter for $\U(V_0^{\pm})$,
 where $V_0^{\pm}$ are the $\varepsilon$-Hermitian spaces of dimension $n-2(k_1+\cdots+k_r)$ over $E$.
\end{itemize}
We note that the natural map $S_{\phi_0} \rightarrow S_\phi$ is an isomorphism.

\end{itemize}

\section{\textbf{Local theta correspondence}}

In this section, we state the local theta correspondence for three pairs of unitary groups, namely, $(U(1),U(1))$, $(U(1),U(2))$, $(U(1),U(3))$. From now on, for $\epsilon = \pm 1$, we shall denote by $V_n^\epsilon$ the $n$-dimensional Hermitian space with $\epsilon(V_n^\epsilon) = \epsilon$ and by $W_n^\epsilon$ the $n$-dimensional skew-Hermitian space with $\epsilon(W_n^\epsilon) = \epsilon$, so that $W_n^\epsilon = \delta \cdot V_n^\epsilon$.

\subsection{The Weil representation for Unitary groups}
\label{SS:Weil}In this subsection, we introduce the Weil representation. Since the constructions of global and local Weil representation are similar, we will treat both of them simultaneously. If the same statement can be applied to both the local and global cases, we will not use the distinguished notation $U(V)(F)$ and $U(V)(\mathbb{A}_F)$, but just refer them to $U(V)$.

Let $E/F$ be a quadratic extension of local or global fields and $(V,\langle,\rangle_{V})$ be a Hermitian space and $(W,\langle,\rangle_{W})$ a skew-Hermitian space over $E$.

\noindent Define the symplectic space
$$
\mathbb{W}_{V,W} := \operatorname{Res}_{E/F} V \otimes_E W
$$
with the symplectic form $$
\langle v \otimes w,v' \otimes w' \rangle_{\mathbb{W}_{V,W}} := \frac{1}{2}\operatorname{tr}_{E/F}\left(\langle v,v'\rangle_{V}\otimes {\langle w,w' \rangle}_{W}\right).
$$
We also consider the associated symplectic group $Sp(\mathbb{W}_{V,W})$ preserving $\langle \cdot,\cdot \rangle_{\mathbb{W}_{V,W}}$ and the metaplectic group $\widetilde{Sp}(\mathbb{W}_{V,W})$ satisfying the following short exact sequence :

$$1\to \CC^\times\to\widetilde{Sp}(\mathbb{W}_{V,W})\to Sp(\mathbb{W}_{V,W})\to 1.$$\\ 
Let $\mathbb{X}_{V,W}$ be a Lagrangian subspace of $\mathbb{W}_{V,W}$ and we fix an additive character $\psi:\mathbb{A}_F/F\to\CC^\times$ (globally) or $\psi:F\to\CC^\times$ (locally). Then we have a Schr\"odinger model of the Weil Representation $\omega_\psi$ of $\widetilde{Sp}(\mathbb{W})$ on $\mathcal{S}(\mathbb{X}_{V,W})$, where $\mathcal{S}$ is the Schwartz-Bruhat function space.

Once and for all, we fix an unitary character $\chi$ of $\mathbb{A}_E^{\times}/E^{\times}$ or $E^{\times}$ whose restriction to $\mathbb{A}_F^{\times}/F^{\times}$ of $F^{\times}$ is $\omega_{E/F}$. Let $\chi_V,\chi_W$ be unitary characters of  $\mathbb{A}_E^{\times}/E^{\times}$ or $E^{\times}$ such that 
$$\chi_V|_{\mathbb{A}_F^{\times}/F^{\times} \text{ or } F^{\times}}:=\omega_{E/F}^{\text{dim}_E W} \quad \text{ and } \quad \chi_W|_{\mathbb{A}_F^{\times}/F^{\times} \text{ or } F^{\times}}:=\omega_{E/F}^{\text{dim}_E V}.$$

\noindent By \cite[\S 1]{Ha}, such a choice $(\chi_{V}, \chi_{W})$ determines a splitting homomorphism $$
\iota_{\chi_{V}, \chi_{W}}:U(V)\times U(W)\to \widetilde{Sp}(\mathbb{W}_{V,W})$$ and so by composing this to $\omega_\psi$, we have a Weil representation $\omega_\psi\circ \iota_{\chi_{V},\chi_{W}}$ of $U(V) \times U(W)$ on $\mathbb{S}(\mathbb{X}_{V,W})$.
%With a choice of $\psi$ and $(\chi_{V}, \chi_{W})$ is fixed as above, we simply write $$\omega_{\psi,W,V}:= \omega_\psi\circ \iota_{\chi_{V},\chi_{W}}.$$ 

Throughout the rest of the paper, when it comes to a Weil representations of $U(V) \times U(W)$, we shall denote $ \omega_\psi\circ \iota_{\chi_{V},\chi_{W}}$ by $\omega_{\psi,W,V}$ with understanding the choices of characters $(\chi_V,\chi_W)$ were made as follows: 
$$\chi_V=\chi^{\text{dim}_E W} \quad \text{ and } \quad \chi_W=\chi^{\text{dim}_E V}.$$

\begin{rem} When $\dim_E W=1$, the image of $U(W)$ in $\widetilde{Sp}(\mathbb{W}_{V,W})$ coincides with the image of the center of $U(V)$, so we can regard the Weil representation of $U(V) \times U(W)$  as the representation of $U(V)$ and we denote the Weil representation as just $\omega_{\psi,V}$.
\end{rem}

\subsection{Local theta correspondence}

Given a Weil representation $\omega_{\psi,W,V}$ of $U(V) \times U(W)$ and an irreducible smooth representation $\pi$ of $U(W)$, the maximal $\pi$-isotypic quotient of $\omega_{\psi,V,W}$, say $\mathbb{S}(\pi)$, is of the form
\[\mathbb{S}(\pi) \cong \Theta_{\psi,V,W}(\pi) \boxtimes \pi
\]
for some smooth representation $\Theta_{\psi,V,W}(\pi)$ of $U(V)$ of finite length.
By the Howe duality, which was first proved by Waldspurger \cite{w} except for $p\ne 2$ and recently completed by Gan and Takeda in \cite{gt1}, \cite{gt2}, the maximal semisimple quotient $\theta_{\psi,V,W}(\pi)$ of $\Theta_{\psi,V,W}(\pi)$ is either zero or irreducible.

In this paper, we consider three kinds of theta correspondences for $\U(V) \times \U(W)$ : 
\begin{enumerate}
\item $\dim V =1 \ \text{,} \ \dim W=1$

\item $\dim V =2 \ \text{,} \ \dim W=1$

\item $\dim V =3 \ \text{,} \ \dim W=1$
\end{enumerate}

For the cases $|\dim V - \dim W|=0 \text{ and } 1 $, D.~Prasad \cite{pra} conjectured the local theta correspondence in terms of the local Langlands correspondence and quite recently, Gan and Ichino proved both cases in \cite{gi}, \cite{iw}. For what follows, we fix an additive character $\psi:F \to \CC^{\times}$.

\subsection{Case (i)} 

We first consider the theta correspondence for $\U(V_1^\epsilon) \times \U(W_1^{\epsilon'})$.
The following summarizes some results of \cite{Gan3}, \cite{gi}, \cite{iw}, which are specialized to this case.

\begin{thm}  \label{Te}
Let $\phi$ be an $L$-parameter for $\U(W_1^\pm)$.
Then we have:
\begin{enumerate}
\item For any fixed $\pi \in \Pi_{\phi}^{\epsilon'}$, exactly one of $\Theta_{\psi, V_1^+,W_1^{\epsilon'}}(\pi)$ or $\Theta_{\psi, V_1^-,W_1^{\epsilon'}}(\pi)$ is nonzero.

\item $\Theta_{\psi, V_1^\epsilon, W_1^{\epsilon'}}(\pi) \ne 0$ if and only if
\[  \epsilon(\tfrac{1}{2}, \phi  \otimes  \chi^{-1}, \psi^E_2)  = \epsilon \cdot \epsilon', \]
where
\[  \psi^E_2(x)  = \psi(\Tr_{E/F}(\delta x)).  \]

\item If $\Theta_{\psi, V_1^{\epsilon},W_1^{\epsilon'}}(\pi)$ is nonzero, then $\theta_{\psi, V_1^{\epsilon},W_1^{\epsilon'}}(\pi)$ has $L$-parameter
\[   \theta(\phi)  =  \phi . \]

\item The theta correspondence $\pi \mapsto   \theta_{\psi, V_1^{\epsilon}, W_1^{\epsilon'}}(\pi)$ gives a bijection
\[  \Pi_{\phi}  \longleftrightarrow   \Pi_{\theta(\phi)}. \]

\item Let $S_{\phi}=S_{\theta(\phi)}=(\ZZ/2\ZZ)a_1.$ 
Since $n=1$ and $\phi=\theta(\phi)$, there is the same bijection \[  J^{\psi}(\phi):  \Pi_{\phi} \longleftrightarrow \Irr(S_{\phi})   \quad \text{and} \quad J^{\psi}(\theta(\phi)):  \Pi_{\theta(\phi)} \longleftrightarrow \Irr(S_{\theta(\phi)}). \]
With these bijections, we can describe the bijection
\begin{align*}
 \Irr(S_{\phi}) & \longleftrightarrow \Irr(S_{\theta(\phi)}) \\
 \eta & \longleftrightarrow \theta(\eta)
\end{align*}
induced by the theta correspondence in (iv) as follows:
\[  \theta(\eta)(a_1) =  \eta(a_1)  \cdot \epsilon(\tfrac{1}{2}, \phi \otimes  \chi^{-1},  \psi^E_2). \]

\item If $\Theta_{\psi, V_1^{\epsilon}, W_1^{\epsilon'}}(\pi)$ is nonzero, then $\Theta_{\psi, V_1^{\epsilon}, W_1^{\epsilon'}}(\pi)$ is irreducible and so $\Theta_{\psi, V_1^{\epsilon}, W_1^{\epsilon'}}(\pi)=\theta_{\psi, V_1^{\epsilon}, W_1^{\epsilon'}}(\pi)$.

\end{enumerate}
\end{thm}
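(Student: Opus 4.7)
My plan is to verify each of the six statements by specialising the general machinery of Gan--Ichino and Prasad to the rank-one situation, where $\U(V_1^\epsilon)(F)$ and $\U(W_1^{\epsilon'})(F)$ are both isomorphic to $E^1$ and all of their irreducible smooth representations are characters. The underlying symplectic space $\mathbb{W}_{V_1,W_1} = \Res_{E/F}(V_1 \otimes_E W_1)$ is two-dimensional over $F$, so a Lagrangian is one-dimensional and the Weil representation realises on $\mathscr{S}(F)$. I would first decompose $\omega_{\psi, V_1, W_1}$ explicitly as a representation of $E^1 \times E^1$ via the splitting $\iota_{\chi_{V_1}, \chi_{W_1}}$ coming from the normalisation $\chi_{V_1} = \chi_{W_1} = \chi$; the resulting fine spectral decomposition will drive all the assertions.

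For parts (i) and (ii), I would combine the conservation relation with the doubling epsilon-integral. Since $V_1^+$ and $V_1^-$ are both anisotropic and lie in distinct Witt towers, a theta dichotomy argument (Harris--Kudla--Sweet, refined in the unitary setting by Gan--Ichino \cite{gi}) shows that exactly one of $\Theta_{\psi, V_1^+, W_1^{\epsilon'}}(\pi)$ or $\Theta_{\psi, V_1^-, W_1^{\epsilon'}}(\pi)$ is nonzero. To identify which, I would evaluate the local doubling zeta integral at $s = 1/2$ and invoke the Rallis inner product formula; the resulting local root number works out to $\epsilon(\tfrac{1}{2}, \phi \otimes \chi^{-1}, \psi^E_2)$. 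The character $\chi^{-1}$ reflects the splitting data $\chi_{V_1} = \chi_{W_1} = \chi$, while the additive character $\psi^E_2(x) = \psi(\Tr_{E/F}(\delta x))$ reflects the symplectic form $\tfrac{1}{2}\Tr_{E/F}$ on $\mathbb{W}_{V_1, W_1}$ after tensoring.

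For (iii)--(v), note that an $L$-parameter for $\U(V_1)$ or $\U(W_1)$ is simply a conjugate self-dual character of $W_E$ of sign $+1$, so the parameter space is the same on both sides; tracing the action of the two tori on the character lines of $\omega_{\psi, V_1, W_1}$ directly gives $\theta(\phi) = \phi$. Since $|\Pi_\phi| = 2$ (one member for each sign $\epsilon$), the dichotomy in (i)--(ii) pairs each element of $\Pi_\phi^{\epsilon'}$ with a unique element of $\Pi_{\theta(\phi)}^\epsilon$ for the appropriate $\epsilon$, yielding (iv). For the character formula in (v), I would compare the Kottwitz-normalised labelling $J^\psi(\phi)$ on each side: since swapping the sign of the Hermitian form corresponds to switching $\eta(a_1)$, and since (ii) pins down the sign $\epsilon \cdot \epsilon'$ in terms of the local root number, the identity $\theta(\eta)(a_1) = \eta(a_1) \cdot \epsilon(\tfrac{1}{2}, \phi \otimes \chi^{-1}, \psi^E_2)$ drops out.

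Finally, (vi) is easiest: $\U(V_1^\epsilon)$ is anisotropic, so its smooth representations are completely reducible. Hence any nonzero quotient of $\omega_{\psi, V_1, W_1}$ is already semisimple, and Howe duality then forces $\Theta_{\psi, V_1^\epsilon, W_1^{\epsilon'}}(\pi)$ to coincide with its maximal semisimple quotient $\theta_{\psi, V_1^\epsilon, W_1^{\epsilon'}}(\pi)$ and be irreducible. I expect the main technical obstacle to be the precise matching of the local root number produced by the doubling integral in (ii) with the epsilon factor as it enters the Kottwitz normalisation of $J^\psi$ in (v); this requires careful bookkeeping of the splitting characters $\chi_{V_1}, \chi_{W_1}$, the distinction between $\psi^E$ and $\psi^E_2$, and the sign conventions relating $V_1^\pm$ with their associated skew-Hermitian $W_1^\pm = \delta \cdot V_1^\pm$.
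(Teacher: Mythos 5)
The paper itself offers no proof of this statement: Theorem~\ref{Te} is prefaced by the sentence that it ``summarizes some results of \cite{Gan3}, \cite{gi}, \cite{iw}, which are specialized to this case,'' and no proof environment follows. So there is no in-paper argument to compare against; your task is effectively to reconstruct (or at least gesture at) the arguments in those references.

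Your sketch is a reasonable account of what those references do. For (i)--(ii) you correctly invoke theta dichotomy (Harris--Kudla--Sweet / the conservation relation) together with the doubling integral and Rallis inner product formula, which is indeed the mechanism by which Gan--Ichino \cite{gi} obtain the epsilon-factor criterion at equal rank. For (iii)--(v), your observation that $\phi=\theta(\phi)$ because a $1$-dimensional conjugate self-dual parameter is preserved, that $|\Pi_\phi|=2$, and that the dichotomy therefore gives a bijection $\Pi_\phi\leftrightarrow\Pi_{\theta(\phi)}$, is correct; the character formula in (v) then follows once the Kottwitz-normalised labels $J^\psi$ are matched on both sides. For (vi) the statement that $\U(V_1^\epsilon)\cong E^1$ is anisotropic, hence compact over a local field, hence has a semisimple category of smooth representations, is sound and gives $\Theta=\theta$ immediately.

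Two caveats. First, your treatment of (ii) and (v) is explicitly a sketch: you acknowledge the epsilon-factor bookkeeping as the ``main technical obstacle'' but do not carry it out, and that bookkeeping (tracking the splitting characters $\chi_{V_1}=\chi_{W_1}=\chi$, the passage between $\psi^E$ and $\psi_2^E$, and the sign $\epsilon(W_1)=\epsilon(V_1)$ under $W_1=\delta V_1$) is precisely where the content of the theorem lives. It would be more honest to say that (ii) and (v) are Theorem~4.1/Appendix~C in \cite{iw} specialized to $n=1$, rather than to claim they ``drop out.'' Second, in your opening you propose to ``decompose $\omega_{\psi,V_1,W_1}$ explicitly as a representation of $E^1\times E^1$''; such an explicit decomposition is possible and elementary here, and it would actually give a self-contained proof of (i)--(iii) and (vi) without invoking the general dichotomy machinery, but you do not actually carry it out. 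Either commit to the explicit rank-one computation or defer cleanly to the references, as the paper does.
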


\subsection{Case (ii)}

Now we shall consider the theta correspondence for $\U(V_{2}^\epsilon) \times \U(W_1^{\epsilon'})$. The following summarizes some results of \cite{Gan3}, \cite{gi}, \cite{iw}, which are specialized to this case.

\begin{thm}  \label{Tae}

Let $\phi$ be an $L$-parameter for $\U(W_1^{\pm})$. Then we have:

\begin{enumerate}

\item Suppose that $\phi=\chi^2$.

\begin{enumerate}

 \item For any $\pi \in \Pi_{\phi}^{\epsilon'}$, 
 $\Theta_{\psi, V_{2}^{\epsilon}, W_1^{\epsilon'}}(\pi)$ is nonzero and $\theta_{\psi, V_{2}^{\epsilon}, W_1^{\epsilon'}}(\pi)$ has $L$-parameter
 \[  \theta(\phi) = (\phi \otimes \chi^{-1}) \oplus  \chi. \]

 \item For each $\epsilon = \pm 1$, the theta correspondence $\pi \mapsto \theta_{\psi, V_{2}^{\epsilon}, W_1^{\epsilon'}}(\pi)$ gives a bijection
\[  \Pi_{\phi} \longleftrightarrow \Pi_{\theta(\phi)}^{\epsilon}.  \]  

\end{enumerate} 

\item Suppose that $\phi$ contains $\chi^2$.

\begin{enumerate}

\item For any fixed $\pi \in \Pi_{\phi}^{\epsilon'}$, exactly one of $\Theta_{\psi, V_{2}^+, W_1^{\epsilon'}}(\pi)$ or $\Theta_{\psi, V_{2}^-, W_1^{\epsilon'}}(\pi)$ is nonzero.

\item If $\Theta_{\psi, V_{2}^{\epsilon}, W_1^{\epsilon'}}(\pi)$ is nonzero, then $\theta_{\psi, V_{2}^{\epsilon}, W_1^{\epsilon'}}(\pi)$ has $L$-parameter
 \[  \theta(\phi) = (\phi \otimes \chi^{-1} ) \oplus  \chi. \]

\item The theta correspondence $\pi \mapsto \theta_{\psi, V_{2}^{\epsilon}, W_1^{\epsilon'}}(\pi)$ gives a bijection
 \[  \Pi_{\phi} \longleftrightarrow \Pi_{\theta(\phi)}. \]

\end{enumerate} 
\item
\begin{itemize}

\item If $\phi \ne \chi^2$, let 
\[  S_{\phi}=(\ZZ/2\ZZ)b_1 \ , \ S_{\theta(\phi)}  = (\ZZ/2\ZZ)b_1 \times (\ZZ /2 \ZZ) b_2, \]
where the extra copy of $\ZZ/2\ZZ$ of $S_{\theta(\phi)}$ arises from the summand $\chi$ in $\theta(\phi)$. \\
Using the two bijections \[  J^{\psi}(\phi):  \Pi_{\phi} \longleftrightarrow \Irr(S_{\phi})   \quad \text{and} \quad J^{\psi}(\theta(\phi)):  \Pi_{\theta(\phi)} \longleftrightarrow \Irr(S_{\theta(\phi)}), \]
we obtain a bijection
\begin{align*}
 \Irr(S_{\phi}) & \longleftrightarrow \Irr^{\epsilon}(S_{\theta(\phi)}) \\
 \eta & \longleftrightarrow \theta(\eta)
\end{align*}
induced by the theta correspondence, where $\Irr^{\epsilon}(S_{\theta(\phi)})$ is the set of irreducible characters $\eta'$ of $S_{\theta(\phi)}$ such that $\eta'(z_{\theta(\phi)}) =\eta'((b_1,1)) \cdot \eta'((1,b_2))= \epsilon$, and the bijection is determined by
 \[  \theta(\eta)|_{S_{\phi}}  =\eta. \]

\item If $\phi = \chi^2$, then $\phi \otimes \chi^{-1}= \chi$, and so
 \[
   S_{\theta(\phi)}  =  S_{\phi}. \]
Thus, one has a canonical bijection
\begin{align*}
  \Irr(S_{\phi}) & \longleftrightarrow \Irr(S_{\theta(\phi)}) \\
  \eta & \longleftrightarrow \theta(\eta)
\end{align*}
induced by the theta correspondence and it is given by \[ \theta(\eta) = \eta. \]

\end{itemize}

\item If $\Theta_{\psi, V_{2}^{\epsilon}, W_1^{\epsilon'}}(\pi)$ is nonzero, then $\Theta_{\psi, V_{2}^{\epsilon}, W_1^{\epsilon'}}(\pi)$ is irreducible and so  $\Theta_{\psi, V_{2}^{\epsilon}, W_1^{\epsilon'}}(\pi)=\theta_{\psi, V_{2}^{\epsilon}, W_1^{\epsilon'}}(\pi).$

\end{enumerate}

\end{thm}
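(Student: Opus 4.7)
The plan is to deduce Theorem \ref{Tae} from the equal-rank case of Theorem \ref{Te} by combining a Jacquet-module computation for the Weil representation with the conservation relation for unitary dual pairs, following the strategy of Gan--Ichino. First I would compute the Jacquet module of $\omega_{\psi,V_2,W_1}$ along the Siegel parabolic $P = MU$ of $\U(V_2)$, whose Levi $M$ is isomorphic to $\mathrm{GL}_1(E)$. Kudla's filtration expresses this Jacquet module as an extension whose graded pieces are described in terms of the (trivial) theta correspondence for the pair $\U(V_0) \times \U(W_1)$ and the twist by $\chi \cdot |\cdot|^{1/2}$. By Frobenius reciprocity this shows that $\Theta_{\psi,V_2^\epsilon,W_1^{\epsilon'}}(\pi)$ is a nonzero subquotient of a standard module whose Langlands parameter is $\theta(\phi) = (\phi \otimes \chi^{-1}) \oplus \chi$, which yields both the L-parameter predictions in (i)(a) and (ii)(b) and the irreducibility assertion in (iv). When $\phi \ne \chi^2$ the relevant induced representation is already irreducible; when $\phi = \chi^2$ it has length two and one must identify the theta lift as a specific constituent.

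Next I would establish the non-vanishing statements and the dichotomy between the two signs $\epsilon = \pm 1$ using the conservation relation for unitary dual pairs. Combined with Theorem \ref{Te}, which describes the first occurrence at the level of $\U(V_1^\pm)$, the conservation relation forces, for generic $\phi$, exactly one of $\Theta_{\psi,V_2^+,W_1^{\epsilon'}}(\pi)$ and $\Theta_{\psi,V_2^-,W_1^{\epsilon'}}(\pi)$ to be nonzero, proving (ii)(a). For the exceptional case $\phi = \chi^2$ both signs give non-vanishing, because the $\chi$-component of the candidate lift $\chi \oplus \chi$ already occurs at a lower step in each tower; this gives (i)(a). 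The precise sign that occurs in (ii)(a) is determined by the epsilon-factor criterion of Theorem \ref{Te}(ii) applied to the trivial step $V_0$ combined with the parity shift induced by passing through the Siegel parabolic.

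Finally, to pin down the character $\theta(\eta)$ on $S_{\theta(\phi)}$, I would apply the seesaw diagram
\[
\xymatrix{
\U(V_2^\epsilon) \times \U(V_2^\epsilon) \ar@{-}[dr] \ar@{-}[d] & \U(W_1^{\epsilon'}) \ar@{-}[d] \\
\U(V_2^\epsilon) \ar@{-}[ur] & \U(W_1^{\epsilon'}) \times \U(W_1^{\epsilon'})
}
\]
to reduce the computation of the relevant Hom space to the equal-rank setting of Theorem \ref{Te}, and then compare with the tempered Fourier--Jacobi GGP conjecture of Beuzart-Plessis and Gan--Ichino. This directly gives the relation $\theta(\eta)|_{S_\phi} = \eta$ in (iii), reflecting that the theta lift preserves the ``old'' part of the component group, while the value of $\theta(\eta)$ on the new basis element $b_2$ is pinned down by an epsilon-factor computation analogous to Theorem \ref{Te}(v). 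The main obstacle will be the degenerate case $\phi = \chi^2$: here $\theta(\phi) = \chi \oplus \chi$ has a repeated summand, so $S_{\theta(\phi)}$ canonically equals $S_\phi$ rather than acquiring a new $\ZZ/2\ZZ$ factor, and the theta lift is nonzero on both pure inner forms. Handling this case requires an explicit analysis of the Langlands quotient of $\mathrm{Ind}_P^{\U(V_2^\epsilon)}(\chi \cdot |\cdot|^{1/2})$ and a direct computation in the Schr\"odinger model to verify the canonical identity $\theta(\eta) = \eta$ rather than the shifted formula of the generic case.
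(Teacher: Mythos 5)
The paper itself does not prove this theorem; it presents Theorem~\ref{Tae} as a summary of known results, citing \cite{Gan3}, \cite{gi}, and \cite{iw} specialized to the pair $\U(V_2^\epsilon) \times \U(W_1^{\epsilon'})$. So your task is really to reconstruct the Gan--Ichino argument rather than to match a proof in this paper.

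Your overall strategy --- Kudla's filtration for the $L$-parameter and irreducibility, conservation together with Theorem~\ref{Te} for the sign dichotomy, and a see-saw matched against a known GGP case for the component-group characters --- is indeed the right blueprint. But the see-saw you draw,
\[
\xymatrix{
\U(V_2^\epsilon) \times \U(V_2^\epsilon) \ar@{-}[dr] \ar@{-}[d] & \U(W_1^{\epsilon'}) \ar@{-}[d] \\
\U(V_2^\epsilon) \ar@{-}[ur] & \U(W_1^{\epsilon'}) \times \U(W_1^{\epsilon'})}
\]
is the \emph{doubling} see-saw: the identity it yields governs doubling zeta integrals and matrix coefficients, not a Fourier--Jacobi or Bessel period, so it does not let you plug in (FJ)$_1$ or (B)$_1$. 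The see-saw that actually isolates the component-group data is the one attached to the orthogonal decomposition $V_2 = V_1 \oplus L$, namely
\[
 \xymatrix{
  \U(W_1^{\epsilon'})  \times \U(W_1^{\epsilon'})  \ar@{-}[dr] \ar@{-}[d] & \U(V^{\epsilon}_{2}) \ar@{-}[d] \\
  \U(W_1^{\epsilon'}) \ar@{-}[ur] &  \U(V^{\epsilon}_1) \times \U(L)},
\]
which is precisely the analogue of the diagram this paper uses in its proof of the companion Theorem~\ref{thm1} for $\U(V_3)\times\U(W_1)$. It converts $\Hom_{\U(V_1^\epsilon)}\bigl(\theta_{\psi,V_2^\epsilon,W_1^{\epsilon'}}(\pi),\tau\bigr)$ into a Fourier--Jacobi Hom-space for $\U(W_1^{\epsilon'})\times\U(W_1^{\epsilon'})$, where (FJ)$_1$ pins down $\theta(\eta)|_{S_\phi}$. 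You should also drop the claim that $\theta(\eta)(b_2)$ needs a separate epsilon-factor computation ``analogous to Theorem~\ref{Te}(v)'': once $\theta(\eta)|_{S_\phi}=\eta$ is established, the constraint $\theta(\eta)(b_1)\cdot\theta(\eta)(b_2)=\epsilon$ (recording that the lift lives on $\U(V_2^\epsilon)$) already fixes $\theta(\eta)(b_2)$, so part (iii) carries no independent information on the new generator.
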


\subsection{Case (iii)} Now we shall consider the theta correspondence for $\U(V_{3}^\epsilon) \times \U(W_1^{\epsilon'})$. The following summarizes some results of \cite{Ge1}, \cite{Ge2}, \cite{Ha}.
\begin{thm}\label{theta}Let $\phi$ be a $L$-parameter of $U(W_1^{\pm})$. Then we have:

\begin{enumerate}
\item For any $\epsilon,\epsilon'=\pm1$ and any $\pi \in \Pi_{\phi}^{\epsilon'}$, $\Theta_{\psi, V_3^{\epsilon},W_1^{\epsilon'}}(\pi)$ is nonzero and irreducible.

\item $\Theta_{\psi, V_3^{\epsilon},W_1^{\epsilon'}}(\pi)$ $=\begin{cases} \text{a non-tempered representation} & \text{if $\epsilon(\frac{1}{2},\phi \otimes \chi^{-3}, \psi_2^{E})=\epsilon \cdot \epsilon'$} \\ \text{a supercuspidal representation } & \text{ if  $\epsilon(\frac{1}{2},\phi \otimes \chi^{-3}, \psi_2^{E})=-\epsilon \cdot \epsilon'$}. \end{cases} $

\item The $L$-parameter $\theta(\phi)$ of $\Theta_{\psi, V_3^{\epsilon},W_1^{\epsilon'}}(\pi)$ has the following two forms;
\\ $$\theta(\phi)=\begin{cases} \theta_1(\phi)=\chi  |\cdot|_{E}^{\frac{1}{2}} \oplus \phi \cdot \chi^{-2} \oplus \chi  |\cdot|_{E}^{-\frac{1}{2}}& \text{if $\epsilon(\frac{1}{2},\phi \otimes \chi^{-3}, \psi_2^{E})=\epsilon \cdot \epsilon'$} \\ \theta_2(\phi)= \phi \cdot \chi^{-2} \oplus \chi \boxtimes \textbf{S}_2 &  \text{if $\epsilon(\frac{1}{2},\phi \otimes \chi^{-3}, \psi_2^{E})=-\epsilon \cdot \epsilon'$}  \end{cases},$$ where $\textbf{S}_2:SL_2(\CC) \to GL_2(\CC)$ is the tautological 2-dimensional representation of $SL_2(\mathbb{C}).$

\end{enumerate}
\begin{proof}The property (i) follows from the Proposition 2.5.1 (b) in \cite{Ge2}. The Proposition 5.2.2 in \cite{Ge2} asserts that $\Theta_{\psi,V_3^{\epsilon},W_1^{\epsilon'}}(\pi)$ is supercuspidal if and only if $\Theta_{\psi,V_1^{\epsilon},W_1^{\epsilon'}}(\pi\otimes (\chi_{1})^{-1})=0$ where $\chi_1$ is the restriction of $\chi$ to $E^1$. Since the base change of $\chi_1$ to $GL_1(E)$ is defined by $\chi_E(x):=\chi_1(\frac{x}{\bar{x}})=\chi^2(x)$, the $L$-parameter of $\pi\otimes (\chi_1)^{-1}$ is $\phi \otimes \chi^{-2}$. Thus by the Theorem \ref{Te} (ii), $$\Theta_{\psi,V_1^{\epsilon},W_1^{\epsilon'}}(\pi\otimes (\chi_{1})^{-1})=0  \Longleftrightarrow \epsilon(\frac{1}{2},\phi \otimes \chi^{-3}, \psi_2^{E})=-\epsilon \cdot \epsilon'$$  and this accounts for the property (ii). The property (iii) follows from the description of $L$-parameters of $\Theta_{\psi, V_3^{\epsilon},W_1^{\epsilon'}}(\pi)$ in \cite[\S7]{Ha}. (see page 985) 
\end{proof}

\end{thm}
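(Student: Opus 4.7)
The plan is to reduce each of the three assertions to results already established: the Gelbart--Rogawski analysis of the $U(V_3) \times U(W_1)$ theta correspondence in \cite{Ge1}, \cite{Ge2}, together with the equal-rank case already recorded as Theorem \ref{Te}. Thus the proof is largely an assembly rather than a new construction.

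For (i), non-vanishing and irreducibility of $\Theta_{\psi, V_3^{\epsilon}, W_1^{\epsilon'}}(\pi)$: the pair $U(V_3) \times U(W_1)$ is small enough that Gelbart--Rogawski have analyzed it explicitly. I would invoke Proposition 2.5.1(b) of \cite{Ge2}, which yields both non-vanishing for all choices of signs $\epsilon, \epsilon'$ and irreducibility of the \emph{full} theta lift (not merely of its maximal semisimple quotient)---a feature special to this low-rank dual pair that can fail more generally.

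For (ii), the tempered/supercuspidal dichotomy: I would use the tower property for the $V^{\epsilon}$-tower. The lift to $U(V_3^{\epsilon})$ is supercuspidal exactly when it is the \emph{first} occurrence of $\pi$ in that tower, which by Proposition 5.2.2 of \cite{Ge2} is equivalent to the vanishing of the theta lift to $U(V_1^{\epsilon})$ of a suitably twisted representation $\pi \otimes \chi_1^{-1}$, where $\chi_1 := \chi|_{E^1}$. A short computation shows that the base change of $\chi_1$ to $E^{\times}$ is $\chi^2$, so the $L$-parameter of $\pi \otimes \chi_1^{-1}$ is $\phi \cdot \chi^{-2}$. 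Substituting this into Theorem \ref{Te}(ii) converts the vanishing criterion into the $\epsilon$-factor identity $\epsilon(\tfrac{1}{2}, \phi \cdot \chi^{-3}, \psi_2^E) = -\epsilon \cdot \epsilon'$, exactly the supercuspidal criterion in the statement.

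For (iii), the explicit $L$-parameter: once the dichotomy in (ii) is in hand, the supercuspidal case is recorded directly as the parameter $\phi \cdot \chi^{-2} \oplus (\chi \boxtimes \mathbf{S}_2)$ in \S7 of \cite{Ha}. In the non-tempered case, the tower picture identifies $\Theta_{\psi, V_3^{\epsilon}, W_1^{\epsilon'}}(\pi)$ with the Langlands quotient of the standard module parabolically induced from $\chi |\cdot|_E^{1/2}$ on the Siegel Levi of $U(V_3^{\epsilon})$ tensored with the already-identified $U(V_1^{\epsilon})$-lift (whose parameter is $\phi \cdot \chi^{-2}$); reading off the parameter of this quotient gives $\chi |\cdot|_E^{1/2} \oplus \phi \cdot \chi^{-2} \oplus \chi |\cdot|_E^{-1/2}$. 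The main technical care lies in the character bookkeeping---keeping consistent the relations among $\chi$ on $E^{\times}$, its restriction $\chi_1$ on $E^1$, and their base change $\chi^2$, together with the normalization $\psi_2^E$ versus $\psi^E$ of the additive character used in the various epsilon factors. Once that is straightened out, the theorem falls out by assembling the cited ingredients.
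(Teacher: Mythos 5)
Your proposal is correct and follows essentially the same route as the paper: Proposition~2.5.1(b) of \cite{Ge2} for (i), Proposition~5.2.2 of \cite{Ge2} combined with the base-change computation $\chi_1 \mapsto \chi^2$ and Theorem~\ref{Te}(ii) for (ii), and the parameter description from \cite[\S 7]{Ha} for (iii). The only difference is that for the non-tempered case of (iii) you sketch the Langlands-quotient/Siegel-parabolic picture explicitly, whereas the paper simply cites \cite{Ha} for both cases; this is a harmless expansion of the same argument, not a different route.
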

The following theorem explicates a precise local theta correspondence between $(U(W_1^{\epsilon'}),U(V_3^{\epsilon}))$. The proof of this will be given in Section 5.

\begin{thm}\label{thm1}Let $\phi$ be a $L$-parameter of $U(W_1^{\pm})$ and for $\pi \in \Pi_{\phi}^{\epsilon'}$, let $\theta_1(\phi), \ \theta_2(\phi)$ be the two possible $L$-parameters of $\Theta_{\psi, V_3^{\epsilon},W_1^{\epsilon'}}(\pi)$ as above. Then we have
\begin{enumerate}
\item For $\epsilon, \epsilon'$ such that $\epsilon(\frac{1}{2},\phi \otimes \chi^{-3}, \psi_2^{E})=\epsilon \cdot \epsilon'$, the theta correspondence $\pi \mapsto \theta_{\psi, V_{3}^{\epsilon}, W_1^{\epsilon'}}(\pi)$ gives a bijection
\[  \Pi_{\phi} \longleftrightarrow \Pi_{\theta_1(\phi)}.  \]  

\item For $\epsilon, \epsilon'$ such that $\epsilon(\frac{1}{2},\phi \otimes \chi^{-3}, \psi_2^{E})=-\epsilon \cdot \epsilon'$, the theta correspondence $\pi \mapsto \theta_{\psi, V_{3}^{\epsilon}, W_1^{\epsilon'}}(\pi)$ gives an injection
\[  \Pi_{\phi} \hookrightarrow \Pi_{\theta_2(\phi)}.  \]  

\end{enumerate}
Write \begin{itemize} \item $S_{\phi}\ = \ (\ZZ/2\ZZ)a_1$
\item $S_{\theta_1(\phi)}=(\ZZ/2\ZZ)a_1 \quad \ \quad\quad\quad\quad\quad \text{ if  }\ \epsilon(\frac{1}{2},\phi \otimes \chi^{-3}, \psi_2^{E})=\epsilon \cdot \epsilon'$
 \item $S_{\theta_2(\phi)}=(\ZZ/2\ZZ)a_1   \times (\ZZ/2\ZZ)a_2 
 \quad \text{ if  }\ \epsilon(\frac{1}{2},\phi \otimes \chi^{-3}, \psi_2^{E})=-\epsilon \cdot \epsilon' .$\end{itemize}

\noindent (Note $\theta_2(\phi)$ is the square-integrable $L$-parameter of $U(V_3^{\epsilon})$ and so $(\ZZ/2\ZZ)a_2$ of $S_{\theta_2(\phi)}$ arises from the summand $\chi \boxtimes \textbf{S}_2$ in $\theta_2(\phi)$.)

Using three bijections

\begin{itemize}
\item $J^{\psi}(\phi): \Pi_{\phi} \longleftrightarrow \Irr(S_{\phi}) $

\item $J^{\psi}(\theta_1(\phi)) : \Pi_{\theta_1(\phi)} \longleftrightarrow \Irr(S_{\theta_1(\phi)}) $

\item $J^{\psi}(\theta_2(\phi)) : \Pi_{\theta_2(\phi)} \longleftrightarrow \Irr(S_{\theta_1(\phi)}),$

\end{itemize}

the following bijection and inclusion induced by the theta correspondence 
\begin{align*}
 \Irr(S_{\phi}) & \longleftrightarrow \Irr(S_{\theta_1(\phi)}) \\
 \eta & \longleftrightarrow \theta_1(\eta),
\end{align*}  
\begin{align*}
 \Irr(S_{\phi}) & \hookrightarrow \Irr(S_{\theta_2(\phi)}) \\
 \eta & \mapsto \theta_2(\eta)
\end{align*}can be explicated as follows:
\begin{equation}\label{theta1} \theta_{1}(\eta)(a_1)=\eta(a_1) \cdot \epsilon(\frac{1}{2},\phi \otimes \chi^{-3},\psi_{2}^E ),\end{equation}
 \begin{equation}\label{theta2}\theta_{2}(\eta)(a_1)=\eta(a_1) \cdot \epsilon(\frac{1}{2},\phi \otimes \chi^{-3},\psi_{2}^E ) , \\ \quad \quad \theta_{2}(\eta)(a_2)=-1.\end{equation}
\end{thm}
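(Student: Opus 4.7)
The proof will split along the dichotomy of Theorem~\ref{theta}, according to whether $\Theta_{\psi,V_3^\epsilon,W_1^{\epsilon'}}(\pi)$ is non-supercuspidal or supercuspidal. For the non-supercuspidal case (producing $\theta_1(\phi)$), the plan is to run the Rallis tower argument: by Proposition~5.2.2 of~\cite{Ge2}, $\Theta_{\psi,V_1^\epsilon,W_1^{\epsilon'}}(\pi\otimes\chi_1^{-1})$ is then nonzero, and Kudla's filtration of the Weil representation realizes $\Theta_{\psi,V_3^\epsilon,W_1^{\epsilon'}}(\pi)$ as the unique irreducible quotient of
\[
\Ind_{P}^{\U(V_3^\epsilon)}\bigl(\chi|\cdot|_E^{1/2}\boxtimes\Theta_{\psi,V_1^\epsilon,W_1^{\epsilon'}}(\pi\otimes\chi_1^{-1})\bigr),
\]
where $P\subset\U(V_3^\epsilon)$ is the parabolic with Levi $\GL_1(E)\times\U(V_1^\epsilon)$. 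Compatibility of the local Langlands correspondence with parabolic induction then ensures that the component-group character of this Langlands quotient is inherited from $\Theta_{\psi,V_1^\epsilon,W_1^{\epsilon'}}(\pi\otimes\chi_1^{-1})$ via the natural identification $S_{\phi\chi^{-2}}\overset{\sim}{\to}S_{\theta_1(\phi)}$. Feeding in Theorem~\ref{Te}(v) for $\pi\otimes\chi_1^{-1}$ (whose $L$-parameter is $\phi\chi^{-2}$) supplies the epsilon factor $\epsilon(\tfrac{1}{2},(\phi\chi^{-2})\chi^{-1},\psi_2^E)=\epsilon(\tfrac{1}{2},\phi\chi^{-3},\psi_2^E)$, yielding~\eqref{theta1} and the bijection in~(i).

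For the supercuspidal case (producing the square-integrable $\theta_2(\phi)=\phi\chi^{-2}\oplus\chi\boxtimes\mathbf{S}_2$), $|\Pi_{\theta_2(\phi)}|=4$ and the theta correspondence gives only an injection. The assertion $\theta_2(\eta)(a_2)=-1$ I would prove by identifying the image with the supercuspidal members of Rogawski's A-packets: by Gelbart--Rogawski~\cite{Ge1} these are precisely the theta lifts from $\U(W_1)$, and under the Mok--Kaletha--Minguez--Shin--White endoscopic classification~\cite{Mok,kmsw} the generator $a_2$ coming from the Arthur-$\SL_2$ summand $\chi\boxtimes\mathbf{S}_2$ is the Arthur-sign generator, on which the A-packet sign character acts by $-1$; matching Rogawski's explicit construction with this parametrization gives the formula.

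To compute $\theta_2(\eta)(a_1)$, the plan is to exploit the see-saw
\[
\begin{array}{ccc}
\U(W_1)\times\U(W_1) & & \U(V_3^\epsilon)\\
| & & |\\
\U(W_1)^\Delta & & \U(V_2)\times\U(V_1^\epsilon)
\end{array}
\]
used in the Main Theorem sketch. For a suitably chosen tempered $\pi_2\in\Irr(\U(V_2))$, the see-saw identity
\[
\Hom_{\U(V_2)}\bigl(\Theta_{\psi,V_3,W_1}(\pi),\pi_2^\vee\bigr)\cong\Hom_{\U(W_1)}\bigl(\Theta_{\psi,W_1,V_2}(\pi_2^\vee)\otimes\omega_{\psi,W_1,V_1},\pi\bigr)
\]
converts a Bessel pairing on the left into a Fourier--Jacobi pairing on the right. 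Beuzart-Plessis's tempered Bessel theorem~\cite{bp1,bp2,bp3} for $\U(V_3^\epsilon)\times\U(V_2)$ controls the non-vanishing of the LHS via an epsilon-factor condition, while Gan--Ichino's Fourier--Jacobi theorem~\cite{iw} for $\U(W_1)\times\U(W_1)$ controls the RHS via another epsilon-factor condition (with the parameter of $\Theta_{\psi,W_1,V_2}(\pi_2^\vee)$ furnished by Theorem~\ref{Tae}). As $\pi_2$ ranges over enough tempered test representations, matching these two criteria will force $\theta_2(\eta)(a_1)=\eta(a_1)\cdot\epsilon(\tfrac{1}{2},\phi\chi^{-3},\psi_2^E)$.

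The hardest step will be this last $a_1$-computation in the supercuspidal case. Two interlocking difficulties have to be navigated: first, the test representations $\pi_2$ must be chosen so that both sides of the see-saw are simultaneously governed by the Bessel and FJ theorems above, and so that as $\pi_2$ varies the resulting conditions separate the four characters of $S_{\theta_2(\phi)}$; second, one must compatibly identify the generator $a_1$ of $S_{\theta_2(\phi)}$ (coming from $\phi\chi^{-2}$) with the relevant generators of the component groups in Beuzart-Plessis's and Gan--Ichino's epsilon-factor labellings, and carefully track how the epsilon factors from the three dual-pair theta correspondences $(\U(V_k),\U(W_1))$, $k=1,2,3$, combine to give precisely $\epsilon(\tfrac{1}{2},\phi\chi^{-3},\psi_2^E)$ rather than some accidental variant.
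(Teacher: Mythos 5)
Your strategy is correct in broad outline but diverges from the paper's proof at two of the three steps, and one of your divergences carries a real gap.

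\textbf{Case (i), $\theta_1(\phi)$.} You invoke Kudla's filtration to realize $\Theta_{\psi,V_3^\epsilon,W_1^{\epsilon'}}(\pi)$ as the Langlands quotient of $\Ind_P(\chi|\cdot|^{1/2}\boxtimes\Theta_{\psi,V_1^\epsilon,W_1^{\epsilon'}}(\pi\chi_1^{-1}))$ and then appeal to compatibility of the LLC with parabolic induction. This would work, but it is a substantially heavier route than the paper's. The paper merely notices that $S_{\theta_1(\phi)}=(\ZZ/2\ZZ)a_1$ has exactly two characters, distinguished by their value at $z_{\theta_1(\phi)}=a_1$, which is forced to equal $\epsilon$ because the lift lives on $\U(V_3^\epsilon)$; combined with $\eta(a_1)=\epsilon'$ and the hypothesis $\epsilon=\epsilon'\cdot\epsilon(\tfrac12,\phi\chi^{-3},\psi_2^E)$, the formula~\eqref{theta1} is immediate. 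No tower argument or LLC-with-induction compatibility is needed here.

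\textbf{Case (ii), the $a_1$ value.} Your see-saw argument matching $(\mathrm{B})_2$ on the Bessel side against $(\mathrm{FJ})_1$ on the Fourier--Jacobi side, with $\Theta_{\psi,W_1,V_2}(\pi_2^\vee)$ mediated by Theorem~\ref{Tae}, is essentially the paper's proof; the paper starts from the FJ side by picking a test parameter $\phi_0\ne\chi^2$ with $\epsilon(\tfrac12,\phi_0^{-1}\otimes\phi\otimes\chi^{-1},\psi_2^E)=\epsilon'$ and flows through the see-saw, but this is the same computation run in the opposite direction.

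\textbf{Case (ii), the $a_2=-1$ value.} Here you take a genuinely different route and it has a gap. You want to deduce $\theta_2(\eta)(a_2)=-1$ from Rogawski's and Mok/KMSW's $A$-packet classification, identifying $a_2$ with an ``Arthur-sign generator'' on which the $A$-packet sign character is $-1$. But $a_2\in S_{\theta_2(\phi)}$ is attached to the summand $\chi\boxtimes\mathbf{S}_2$ of the \emph{$L$-parameter}, with $\mathbf{S}_2$ the Deligne $\SL_2$, not the Arthur $\SL_2$. To transfer information from the $A$-packet sign character you would need a precise compatibility between (a) the internal parametrization $J^\psi(\theta_2(\phi))$ of the $L$-packet used throughout this paper, and (b) Rogawski's/Mok's parametrization of the $A$-packet $\Pi(\varrho_v)$, restricted to the supercuspidal member. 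That compatibility is nontrivial, is not among the tools the paper assumes, and is exactly the kind of statement the theorem is meant to establish rather than presuppose. The paper avoids this entirely: once the see-saw is in place and the lifted pair is plugged into the $(\mathrm{B})_2$ recipe, $\theta_2(\eta)(a_2)$ comes out as a product of two epsilon factors, $\epsilon(\tfrac12,\phi_0^{-1}\chi^2\boxtimes\mathbf{S}_2,\psi_{-2}^E)\cdot\epsilon(\tfrac12,\mathbf{S}_2,\psi_{-2}^E)$, and both factors are pinned down by comparing the $(\mathrm{B})_2$ values $\theta^\vee_{\psi,V_2^\epsilon,W_1^{\epsilon'}}(\eta_0)(b_1),\,\theta^\vee_{\psi,V_2^\epsilon,W_1^{\epsilon'}}(\eta_0)(b_2)$ with the values already dictated by Theorem~\ref{Tae} and the contragredient rule from \S\ref{SS:llc}. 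This forces $\epsilon(\tfrac12,\phi_0^{-1}\chi^2\boxtimes\mathbf{S}_2,\psi_{-2}^E)=1$ and $\epsilon(\tfrac12,\mathbf{S}_2,\psi_{-2}^E)=-1$, hence $\theta_2(\eta)(a_2)=-1$, without ever leaving the $(\mathrm{B})/(\mathrm{FJ})$ circle of ideas. You should replace your $A$-packet argument for $a_2$ with this epsilon-factor bookkeeping, which falls out of the very same see-saw you already set up for $a_1$.
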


\section{\textbf{Main Theorem}}

In this section, we prove our main theorem. Prior to stating our main theorem, we first elaborate on the results of Plessis and Gan-Ichino for the Gan--Gross--Prasad conjecture for unitary groups, some of which we shall use in the proof of our main theorem. In \cite[\S3]{iw}, Gan and Ichino have made a neat exposition on this, we quote their treatment here. Throughout this section, we fix a nontrivial additive character $\psi:F \to \CC^{\times}$ and make a tacit use $J^{\psi}$ for the bijection in the local Langlands correspondence.
\subsection{Pairs of spaces}

To explain both the \textbf{(Bessel)} and \textbf{(Fourier-Jacobi)} cases of the Gross--Prasad conjecture simulataneously, we consider the pair of spaces:
\[  V_n^+ \subset V_{n+1}^+ \quad \text{or} \quad W_n^+ = W_n^+. \]
Then their relevant pure inner form (other than itself) are
\[  V_n^- \subset V_{n+1}^-  \quad \text{or} \quad W_n^- = W_n^- .\]

\noindent For $a \in F^{\times}$, if we set $L_a$ denotes the a 1-dimensional Hermitian space with form $a \cdot \N_{E/F}$, then
\[  V_{n+1}^{\epsilon}/V_n^{\epsilon}   \cong L_{(-1)^n}. \]
Write \[  G_n^\epsilon =  \U(V_{n+1}^\epsilon) \times \U(V_{n}^\epsilon) \quad \text{or} \quad \U(W_n^\epsilon) \times \U(W_n^\epsilon), \]
\[   H_n^\epsilon = \U(V_n^\epsilon) \quad \text{or} \quad \U(W_n^\epsilon), \]
and we have a diagonal embedding
\[ \Delta : H_n^\epsilon \hookrightarrow G_n^\epsilon.\]

%We also have the Langlands--Vogan parametrization (depending on the choice of the Whittaker datum) relative to the fixed pair of spaces.
For an $L$-parameter $\phi = \phi^{\diamondsuit} \times \phi^{\heartsuit}$ for $G_n^\pm$, its associated component group is:
\[  S_{\phi}  = S_{\phi^{\diamondsuit}}  \times S_{\phi^{\heartsuit}}. \]

Let $\eta$ be a character of $S_{\phi}$. Then under the local Langlands correspondence, the representation $\pi(\eta) \in \Pi_{\phi}$ is a representation of a relevant pure inner form if and only if
\[  \eta(z_{\phi^{\diamondsuit}},1) =\eta(1,z_{\phi^{\heartsuit}}) , \]
and $\pi(\eta)$ is a representation of $G_n^{\epsilon}$ if and only if 
\[  \eta(z_{\phi^{\diamondsuit}}, 1)  = \eta(1,z_{\phi^{\heartsuit}})  = \epsilon. \]

\subsection{The recipe} 
\label{SS:eta}In this subsection, we define the distinguished characters for the recipe of the GGP conjecture. For an $L$-parameter $\phi =  \phi^{\diamondsuit} \times \phi^{\heartsuit}$ of $G_n$, write \[  S_{\phi^{\diamondsuit}}  = \prod_i  (\ZZ /2 \ZZ) a_i \quad \text{and} \quad S_{\phi^{\heartsuit}} = \prod_j  (\ZZ / 2\ZZ) b_j. \]
Then $\eta \in S_{\phi}$ is completely determined by the signs $\eta(a_i) = \pm 1$ and $\eta(b_j) = \pm 1$.

We define the relevant distinguished characters of $S_{\phi}$ for the Bessel and Fourier--Jacobi cases as follows:

\begin{enumerate}

\item \textbf{Bessel case.}
We set $\psi^E_{-2}(x)  = \psi(-\Tr_{E/F}(\delta x))$ and define $\eta^{\spadesuit}\in \text{Irr}(S_{\phi})$ as follows:
\[
\begin{cases}
 \eta^{\spadesuit}(a_i)  = \epsilon(\frac{1}{2}, \phi^{\diamondsuit}_i \otimes \phi^{\heartsuit}, \psi^E_{-2}); \\
 \eta^{\spadesuit}(b_j)  = \epsilon(\frac{1}{2},  \phi^{\diamondsuit} \otimes \phi^{\heartsuit}_{j}, \psi^E_{-2}).
\end{cases}
\]
\\
\item \textbf{Fourier--Jacobi case.} We set $\psi^E_2(x)  = \psi(\Tr_{E/F}(\delta x))$ and $\psi^E(x)  = \psi(\frac{1}{2}\Tr_{E/F}(\delta x))$. The distinguished character $\eta^{\clubsuit}$ of $S_{\phi}$ depends on the parity of $n = \dim W_n^\epsilon$ as follows:\\

\begin{itemize}
\item If $n$ is odd, we set
\[ \begin{cases}
 \eta^{\clubsuit}(a_i) = \epsilon(\frac{1}{2}, \phi^{\diamondsuit}_{i} \otimes \phi^{\heartsuit} \otimes \chi^{-1},  \psi_2^E);  \\
  \eta^{\clubsuit}(b_j)  = \epsilon(\frac{1}{2}, \phi^{\diamondsuit} \otimes \phi^{\heartsuit}_{j} \otimes \chi^{-1}, \psi_2^E). \end{cases} \]

\item If $n$ is even, we set
\[  \begin{cases}
\eta^{\clubsuit}(a_i) = \epsilon(\frac{1}{2}, \phi^{\diamondsuit}_i \otimes \phi^{\heartsuit} \otimes \chi^{-1},  \psi^E); \\
  \eta^{\clubsuit}(b_j)  = \epsilon(\frac{1}{2}, \phi^{\diamondsuit} \otimes \phi^{\heartsuit}_{j} \otimes \chi^{-1}, \psi^E). \end{cases} \]
\end{itemize}
\end{enumerate}

\subsection{Theorem (B) and (FJ) for generic parameter}\label{gen}

We state the results of Plessis(\cite{bp1}, \cite{bp2}, \cite{bp3}) and Gan-Ichino(\cite{iw}) on the GGP conjecture.

\begin{itemize}

\item[(B)$_n$]
For a \emph{generic} $L$-parameter $\phi$ for $G_n^\pm = \U(V_{n+1}^\pm) \times \U(V_{n}^\pm)$ and a representation $\pi(\eta) \in \Pi_{\phi}$ of a relevant pure inner form $G_n^\epsilon$,
\[  \Hom_{\Delta H_n^\epsilon}(\pi(\eta), \CC) \ne 0 \Longleftrightarrow  \eta = \eta^{\spadesuit}. \]

\item[(FJ)$_n$]
For a \emph{generic} $L$-parameter $\phi$ for $G_n^\pm = \U(W_n^\pm) \times \U(W_n^\pm)$ and a representation $\pi(\eta) \in \Pi_{\phi}$ of a relevant pure inner form $G_n^\epsilon$, 
\[ \Hom_{\Delta H_n^\epsilon}(\pi(\eta), \nu) \ne 0 \Longleftrightarrow \eta = \eta^{\clubsuit}. \]

\end{itemize}

We shall denote by (B) the collection of statements (B)$_n$ for all $n\ge0$, and by (FJ) the collection of statements (FJ)$_n$ for all $n\ge0$.

The main theorem of this paper investigates (B)$_2$ of the conjecture for some endoscopic $L$-parameters of $U(V_3^{\pm})\times U(V_2^{\pm})$ involving a non-generic (and thus non-tempered) parameter of $U(V_3^{\pm})$. Now we state our main theorem.

\begin{thm}\label{thm2} Let $\phi_1,\phi_2$ be a $L$-parameter of $U(W_1^{\pm})$ such that $\phi_2 \ne \chi^2$ and let $$\theta_1(\phi_1)=\chi  |\cdot|_{E}^{\frac{1}{2}} \oplus \phi_1 \otimes \chi^{-2} \oplus \chi  |\cdot|_{E}^{-\frac{1}{2}},$$ $$\theta_2(\phi_1)=\phi_1 \cdot \chi^{-2} \oplus \chi \boxtimes \textbf{S}_2$$ be the two $L$-parameters of $U(V_3^{\pm})$ appeared in Theorem \ref{theta} (iii) and let $$\theta(\phi_2)=\phi_2 \otimes \chi^{-1} \oplus \chi $$ be the $L$-parameters of $U(V_2^{\pm})$ appeared in Theorem \ref{Tae} (ii).\\
Write $$\begin{cases} S_{\theta_1(\phi_1)}=(\ZZ/2\ZZ)a_1; \\ S_{\theta_2(\phi_1)}=(\ZZ/2\ZZ)a_1 \times (\ZZ/2\ZZ)a_2;  \\ S_{\theta(\phi_2)}=(\ZZ/2\ZZ)b_1 \times (\ZZ/2\ZZ)b_2 . \end{cases}$$\\
\noindent Then for $i=1,2$, and for $(\pi_3^{i},\pi_2^{i})\in \Pi_{\theta_i(\phi_1)} \times \Pi_{\theta(\phi_2)},$ 
$$\Hom_{U(V_2^{\epsilon})}(\pi_3^{i},\pi_2^{i})\ne0 \Leftrightarrow (\pi_3^{i},\pi_2^{i})=(\pi_{\theta_i(\phi_1)}(\eta_{i}^{\diamondsuit}),\pi_{\theta(\phi_2)}(\eta_i^{\heartsuit}))$$ where $(\eta_i^{\diamondsuit},\eta_{i}^{\heartsuit})\in \Irr(S_{\theta_i(\phi_1)}) \times  \Irr(S_{\theta(\phi_2)})$ the pair of characters of the component group is specified as follows;\\
\begin{equation}\label{eta1}\begin{cases}\eta_1^{\diamondsuit}(a_1)=\epsilon(\frac{1}{2},\phi_2^{-1} \otimes  \phi_1 \otimes \chi^{-1} ,\psi_2^E)\cdot \epsilon(\frac{1}{2},\phi_1 \otimes \chi^{-3},\psi_{2}^E ); \\\eta_1^{\heartsuit}(b_1)=\epsilon(\frac{1}{2},\phi_2^{-1} \otimes  \phi_1 \otimes \chi^{-1} ,\psi_2^E);\\ \eta_1^{\heartsuit}(b_2)=\epsilon(\frac{1}{2},\phi_1 \otimes \chi^{-3},\psi_{2}^E ),  \end{cases}\end{equation} and 
\begin{equation}\label{eta2}\begin{cases}\eta_2^{\diamondsuit}(a_1)=\epsilon(\frac{1}{2},\phi_2^{-1} \otimes  \phi_1 \otimes \chi^{-1} ,\psi_2^E)\cdot \epsilon(\frac{1}{2},\phi_1 \otimes \chi^{-3},\psi_{2}^E );\\ \eta_2^{\diamondsuit}(a_2)=-1; \\ \eta_2^{\heartsuit}(b_1)=\epsilon(\frac{1}{2},\phi_2^{-1} \otimes  \phi_1 \otimes \chi^{-1} ,\psi_2^E); \\ \eta_2^{\heartsuit}(b_2)=-\epsilon(\frac{1}{2},\phi_1 \otimes \chi^{-3},\psi_{2}^E ).\end{cases}\end{equation}

\end{thm}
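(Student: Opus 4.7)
The plan is to reduce the Bessel restriction problem for $(U(V_3), U(V_2))$ to the Fourier--Jacobi restriction problem for $U(W_1) \times U(W_1)$ via the see-saw diagram in the introduction, then to apply (FJ)$_1$ of \cite{iw} together with the precise theta correspondences of Section 3 (in particular Theorems \ref{Tae} and \ref{thm1}) to transport the resulting character identity back to the component groups $S_{\theta_i(\phi_1)} \times S_{\theta(\phi_2)}$.

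First I would realize $\pi_2^i$ and $\pi_3^i$ as theta lifts from $U(W_1)$. Since $\phi_2 \ne \chi^2$, Theorem \ref{Tae} bijectively writes $\pi_2^i = \Theta_{\psi, \chi, V_2, W_1}(\sigma_2)$ for a unique $\sigma_2 \in \Pi_{\phi_2}$; in Case $i=1$, Theorem \ref{thm1}~(i) bijectively writes $\pi_3^1 = \Theta_{\psi, \chi, V_3, W_1}(\sigma_1)$ for a unique $\sigma_1 \in \Pi_{\phi_1}$; and in Case $i=2$, Theorem \ref{thm1}~(ii) gives only an injection, whose image by (\ref{theta2}) consists of those $\pi_{\theta_2(\phi_1)}(\eta)$ with $\eta(a_2) = -1$. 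The prescribed $\pi_3^2$ (for which $\eta_2^\diamondsuit(a_2) = -1$) therefore lies in the image, while for $\pi_3^2$ with $\eta(a_2) = +1$ (which are supercuspidal by Theorem \ref{theta}~(ii)) a separate argument will show $\Hom_{U(V_2^\epsilon)}(\pi_3^2, \pi_2^2) = 0$. Invoking the see-saw identity then gives
$$\Hom_{U(V_2)}(\pi_3^i \otimes \pi_2^i, \CC) \simeq \Hom_{U(W_1)}\bigl(\Theta_{\psi, \chi, W_1, V_2}(\pi_2^{i\vee}) \otimes \omega_{\psi, \chi, W_1, V_1}, \sigma_1\bigr).$$
Howe duality for the pair $(U(V_2), U(W_1))$ identifies $\Theta_{\psi, \chi, W_1, V_2}(\pi_2^{i\vee})$ with $\sigma_2^\vee$ up to an explicit character twist (from the interaction of contragredient with $\chi$), while Case~(i) of Theorem \ref{Te} identifies $\omega_{\psi, \chi, W_1, V_1}$ (paired with the trivial character of $U(V_1)$) as the theta lift of $\mathbb{I}_{U(V_1)}$, a specific character of $U(W_1)$ whose sign at the component group is $\epsilon(\tfrac{1}{2}, \chi^{-1}, \psi_2^E)$.

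After these identifications the right-hand Hom becomes a pairing of $1$-dimensional characters of the torus $U(W_1^\bullet)$, so is nonzero exactly when the characters coincide. Applying (FJ)$_1$ for $U(W_1) \times U(W_1)$ (valid because the $1$-dimensional parameters $\phi_1, \phi_2$ are generic) packages this coincidence as the single $\epsilon$-factor $\epsilon(\tfrac{1}{2}, \phi_2^{-1} \phi_1 \chi^{-1}, \psi_2^E)$, which is the leading factor in both $\eta_i^\diamondsuit(a_1)$ and $\eta_i^\heartsuit(b_1)$. Transporting the resulting datum from $\Pi_{\phi_1} \times \Pi_{\phi_2}$ back to $\Pi_{\theta_i(\phi_1)} \times \Pi_{\theta(\phi_2)}$ via the explicit formulas (\ref{theta1}), (\ref{theta2}) and Theorem \ref{Tae}~(iii) produces the remaining corrections: the factor $\epsilon(\tfrac{1}{2}, \phi_1 \chi^{-3}, \psi_2^E)$ in $\eta_i^\diamondsuit(a_1)$ and $\eta_i^\heartsuit(b_2)$ from the $U(V_3)$-side theta lift, the value $\eta_2^\diamondsuit(a_2) = -1$ from $\theta_2(\eta)(a_2) = -1$ in (\ref{theta2}), and the sign distinguishing $\eta_1^\heartsuit(b_2)$ from $\eta_2^\heartsuit(b_2)$ from the dichotomy between the two cases of Theorem \ref{thm1}.

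The main obstacle will be the careful sign bookkeeping throughout: tracking how contragredient interacts with the theta lift (it conjugates $\chi$ and $\psi$, thereby flipping certain $\epsilon$-factors), matching pure inner forms so that every character lives on the same $U(W_1^\bullet)$, and verifying that each $\epsilon$-factor in the final formula corresponds to the intended summand of the respective $L$-parameter. The separate treatment of $\pi_3^2$ outside the image of the theta correspondence in Case (ii), not immediate from the see-saw, is a secondary nontrivial point that must also be handled.
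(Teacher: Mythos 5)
Your approach follows the paper's for Case $i=1$: realize $\pi_3^1$ as a theta lift via the bijection of Theorem~\ref{thm1}~(i), apply the see-saw, invoke (FJ)$_1$, and transport back via Theorems~\ref{Tae} and~\ref{thm1}. That part is sound.

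For Case $i=2$, however, there is a genuine gap. You correctly observe that Theorem~\ref{thm1}~(ii) gives only an injection, so roughly half the members of $\Pi_{\theta_2(\phi_1)}$ --- those $\pi_{\theta_2(\phi_1)}(\eta)$ with $\eta(a_2)=+1$ --- are not theta lifts from $U(W_1^{\pm})$, and the see-saw simply does not speak to them. You acknowledge this (``a separate argument will show $\Hom_{U(V_2^{\epsilon})}(\pi_3^2,\pi_2^2)=0$'') but do not supply the argument, and it is not a mere bookkeeping point: without it the ``$\Leftarrow$'' and the uniqueness in Case $i=2$ are unproved. The observation you are missing is that $\theta_2(\phi_1) = \phi_1\cdot\chi^{-2}\oplus\chi\boxtimes\textbf{S}_2$ is square-integrable, hence tempered, and $\theta(\phi_2)$ is tempered as well; so (B)$_2$ (Beuzart-Plessis' theorem for tempered parameters) applies directly to the pair $\theta_2(\phi_1)\times\theta(\phi_2)$ and immediately gives uniqueness of the distinguished pair across the entire relevant Vogan packet, including the non-theta-lift members. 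This is exactly how the paper handles $i=2$: the see-saw is used only to exhibit one nonvanishing pair (for existence and to read off the characters), while (B)$_2$ forces uniqueness. Your plan, as written, has no mechanism to rule out the $\eta(a_2)=+1$ representations.

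A secondary inaccuracy: your parenthetical claim that the $\pi_3^2$ with $\eta(a_2)=+1$ ``are supercuspidal by Theorem~\ref{theta}~(ii)'' misreads that theorem. Theorem~\ref{theta}~(ii) classifies only the theta lifts $\Theta_{\psi,V_3^{\epsilon},W_1^{\epsilon'}}(\pi)$, and by (\ref{theta2}) those all have $\eta(a_2)=-1$; it says nothing about the remaining members of $\Pi_{\theta_2(\phi_1)}$ (which, given the $\chi\boxtimes\textbf{S}_2$ summand, need not be supercuspidal). This misattribution does not affect the logical structure, but it suggests you have not pinned down exactly which representations the see-saw reaches.

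Finally, a minor caution: your description of $\omega_{\psi,\chi,W_1,V_1}$ ``as the theta lift of $\mathbb{I}_{U(V_1)}$, a specific character of $U(W_1)$'' conflates the Weil representation with one of its theta-lift quotients; in the see-saw it is the full Weil representation of $U(W_1)\times U(L_1)$ that appears, and identifying it with the $\omega_{\psi,W_1^{\epsilon'}}$ appearing in (FJ)$_1$ needs to be done carefully (including the choices of splitting characters), not by passing to a quotient.
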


\begin{proof}In this proof, we assume Theorem \ref{thm1} whose proof will be given in the next section.

We first prove the existence of some $\epsilon$ and $(\pi_3^{i},\pi_2^{i})\in \Pi^{\epsilon}_{\theta_i(\phi_1)} \times \Pi^{\epsilon}_{\theta(\phi_2)}$ such that $\Hom_{U(V_2^{\epsilon})}(\pi_3^{i},\pi_2^{i})\ne0$ for each $i=1,2$.  

One has the see-saw diagram : ($\epsilon, \epsilon'$ will be determined soon)
\[
 \xymatrix{
  \U(W_1^{\epsilon'})  \times \U(W_1^{\epsilon'})  \ar@{-}[dr] \ar@{-}[d] & \U(V^{\epsilon}_{3}) \ar@{-}[d] \\
  \U(W_1^{\epsilon'}) \ar@{-}[ur] &  \U(V^{\epsilon}_2) \times \U(L_{1})}.
\]
We consider the three theta correspondence in this diagram : 
\begin{enumerate}
\item $U(V_3^{\epsilon}) \times U(W_1^{\epsilon})$ relative to the pair of characters $(\chi, \chi^3)$;
\item $\U(V^{\epsilon}_2) \times U(W^{\epsilon'}_1)$ relative to the pair of characters $(\chi, \chi^2)$;
\item $\U(L_{1}) \times \U(W_1^{\epsilon'})$ relative to the pair of characters $(\chi, \chi)$.

\end{enumerate}

\noindent Let us take $\epsilon'=\epsilon(\frac{1}{2},\phi_2^{-1} \otimes  \phi_1 \otimes \chi^{-1} ,\psi_2^E).$ Then by (FJ)$_1$, we have $$\Hom_{U(W_1^{\epsilon'})}(\pi_{\phi_2^{-1}}\otimes \pi_{\phi_1},\omega_{\psi,W_1^{\epsilon'}})\ne0.$$
(here, $\pi_{\phi_2^{-1}}, \pi_{\phi_1} \in \Pi^{\epsilon'}_{\phi^{-1}_2} \times \Pi^{\epsilon'}_{\phi_1}$ and note  that the set  $\Pi^{\epsilon'}_{\phi^{-1}_2} \times \Pi^{\epsilon'}_{\phi_1}$ is singleton.)\\
Since $\pi_{\phi_2^{-1}}, \pi_{\phi_1}$ are both unitary, one has $$\Hom_{U(W_1^{\epsilon'})}\big( (\pi_{\phi_2^{-1}})^{\vee}\otimes \omega_{\psi,W_1^{\epsilon'}},\pi_{\phi_1})\ne0$$ and the $L$-parameter of $(\pi_{\phi_2^{-1}})^{\vee}$ is $\phi_2$.\\
For any $\epsilon=\pm1$, Theorem \ref{Tae} (i) asserts that there is $\tau \in \Pi^{\epsilon}_{\theta(\phi_2)}$ such that $\Theta_{\psi,W^{\epsilon'}_1,V^{\epsilon}_2}(\tau)=(\pi_{\phi^{-1}_2})^{\vee}.$
Then by the see-saw identity, one has $$0 \ne \Hom_{U(W_1^{\epsilon'})}\big( (\pi_{\phi_2^{-1}})^{\vee}\otimes \omega_{\psi,W_1^{\epsilon'}},\pi_{\phi_1})=\Hom_{U(V_2^{\epsilon})}(\Theta_{V^{\epsilon}_3,W^{\epsilon'}_1}(\pi_{\phi_1}),\tau).$$
\noindent By Theorem \ref{thm1}, the $L$-parameter of $\Theta_{V^{\epsilon}_3,W^{\epsilon'}_1}(\pi_{\phi_1})$ depends on the $\epsilon$ as follows :
\begin{equation}\label{par}\text{ the } L \text{-parameter of } \Theta_{V^{\epsilon}_3,W^{\epsilon'}_1}(\pi_{\phi_1})= \begin{cases} \theta_1(\phi_1) & \text{ if  } \epsilon = \epsilon' \cdot \epsilon(\frac{1}{2},\phi_1 \otimes \chi^{-3},\psi_{2}^E ), \\ \theta_2(\phi_1) & \text{ if } \epsilon = -\epsilon' \cdot \epsilon(\frac{1}{2},\phi_1 \otimes \chi^{-3},\psi_{2}^E ). \end{cases}\end{equation}
Thus we proved the existence $(\pi_3^{i},\pi_2^{i})\in \Pi^{\epsilon}_{\theta_{i}(\phi_1)} \times \Pi^{\epsilon}_{\theta(\phi_2)}$ such that $\Hom_{U(V_2^{\epsilon})}(\pi_3^{i},\pi_2^{i})\ne0$ for each $i=1,2$. \\ 

\noindent Next, we shall show that such a pair $(\pi_3^{i},\pi_2^{i})$ is unique and their characters of the component group are exactly the same one suggested in (\ref{eta1}), (\ref{eta2}).

For $i=2$, the uniqueness directly follows from (B)$_2$ because $\theta^{(2)}(\phi_1)$ is a tempered $L$-parameter.
(Note that supercuspidal $L$-parameter of $U(V_3^{\epsilon})$ is tempered because the center of $U(V_3^{\epsilon})$ is compact.)

Thus we if set $$\epsilon'=\epsilon(\frac{1}{2},\phi_2^{-1} \otimes  \phi_1 \otimes \chi^{-1} ,\psi_2^E),$$ $$\epsilon=-\epsilon(\frac{1}{2},\phi_2^{-1} \otimes  \phi_1 \otimes \chi^{-1} ,\psi_2^E)\cdot \epsilon(\frac{1}{2},\phi_1 \otimes \chi^{-3},\psi_{2}^E ),$$ the pair $(\Theta_{V^{\epsilon}_3,W^{\epsilon'}_1}(\pi_{\phi_1}),\tau)$ in the above argument is the very one that makes $\Hom_{U(V_2^{\epsilon})}(\pi_3^{2},\pi_2^{2})\ne0$.\\ If we combine this with Theorem \ref{Tae} and Theorem \ref{thm1}, we can easily check that their associated characters of the component group are $\eta_2^{\diamondsuit}, \eta_2^{\heartsuit}$ in (\ref{eta2}).\\

Now, we suppose $\Hom_{U(V_2^{\epsilon})}(\pi_3^{1},\pi_2^{1})\ne0$ for some $(\pi_3^{1},\pi_2^{1})\in \Pi_{\theta^{(1)}(\phi_1)} \times \Pi_{\theta(\phi_2)}.$ \\
By Theorem \ref{thm1}, we can write $\pi_3^{1}=\Theta_{\psi, V_3^{\epsilon},W_1^{\epsilon'}}(\sigma)$ for some $\sigma \in \Pi_{\phi_1}^{\epsilon'}$ and using the see-saw identity, one has $$\Hom_{U(V_2^{\epsilon})}(\pi_3^{1},\pi_2^{1})=\Hom_{U(W_1^{\epsilon'})}(\Theta_{\psi, W_1^{\epsilon'},V_2^{\epsilon}}(\pi_2^{1}) \otimes \omega_{\psi,W_1^{\epsilon'}},\sigma )\ne 0.$$ In particular, we see $\Theta_{\psi, W_1^{\epsilon'},V_2^{\epsilon}}(\pi_2^{1}) \ne 0$.\\
Since $\Theta_{\psi, W_1^{\epsilon'},V_2^{\epsilon}}(\pi_2^{1})$ and $\sigma$ are both unitary, we have $$\Hom_{U(W_1^{\epsilon'})}(\Theta_{\psi, W_1^{\epsilon'},V_2^{\epsilon}}^{\vee}(\pi_2^{1}) \otimes \sigma,\omega_{\psi,W_1^{\epsilon'}} )\ne0$$ and by Theorem \ref{Tae} (i), the $L$-parameter of $\Theta_{\psi, W_1^{\epsilon'},V_2^{\epsilon}}^{\vee}(\pi_2^{1})$ is $\phi_2^{-1}$. Thus by (FJ)$_1$, we see that $$\epsilon'=\epsilon(\frac{1}{2},\phi_2^{-1} \otimes  \phi_1 \otimes \chi^{-1} ,\psi_2^E).$$
By Theorem \ref{Tae} and Theorem \ref{thm1}, one has $$\eta_1^{\diamondsuit}(a_1)=\epsilon=\epsilon(\frac{1}{2},\phi_2^{-1} \otimes  \phi_1 \otimes \chi^{-1} ,\psi_2^E)\cdot \epsilon(\frac{1}{2},\phi_1 \otimes \chi^{-3},\psi_{2}^E ),$$ $$\eta_1^{\heartsuit}(b_1)=\epsilon(\frac{1}{2},\phi_2^{-1} \otimes  \phi_1 \otimes \chi^{-1} ,\psi_2^E),$$  $$\eta_1^{\heartsuit}(b_2)=\epsilon(\frac{1}{2},\phi_1 \otimes \chi^{-3},\psi_{2}^E ).$$

\noindent (The third equality follows from $\eta_1^{\heartsuit}(z_{\theta(\phi_2)})=\eta_1^{\heartsuit}(b_1)\cdot \eta_1^{\heartsuit}(b_2)=\epsilon.$)

\end{proof}

\begin{rem} Recall that an $L$-parameter $\phi$ of $U(V_n^{\pm})$ or $U(W_n^{\pm})$ is called \emph{generic} if its adjoint $L$-function $$L(s,Ad\circ \phi)=L(s,As^{(-1)^n}\circ \phi)$$ is holomorphic at $s=1$. (Here, Ad is the adjoint representation of $^LU(n)$ on its Lie algebra Lie($^LU(n)$).) A conjecture of Gross, Prasad and Rallis \cite[\S 6]{GP} predicts that $\phi$ is generic if and only if  its associated $L$-packet $\Pi_{\phi}$ has a generic representation, and quite recently, Gan and Ichino (Proposition B.1 in the Appendix in \cite{iw}) proved this under the hypothesis of some properties of local Langlands correspondence. Since the Corollary 4.2.3 in \cite{Ge2} asserts that all elements in $\Pi_{\theta_1(\phi_1)}$ have no Whittaker models, we see that $\theta_1(\phi_1)$ is non-generic. 
\end{rem}
\begin{rem} As we mentioned in Remark \ref{rem1.1}, the GGP-conjecture is no longer true for non-generic $L$-parameter and Theorem \ref{thm2} shows that for having $\Hom_{U(V_2^{\epsilon})}(\pi_3,\pi_2)\ne0$, the $L$-parameter of $\pi_2$ should be of very special form, namely, a theta lift from $U(W_1^{\pm})$. 

\end{rem}

\begin{rem}\label{rem1}If $\pi_3=\Theta_{\psi,V_3^{\epsilon},W_1^{\epsilon'}}(\phi_1)$, $\pi_2=\Theta_{\psi,V_2^{\epsilon},W_1^{\epsilon'}}(\phi_2)$ for some $L$-parameters $\phi_1,\phi_2$ of $U(W_1^{\epsilon'})$, then the above result is condensed into one sentence as follows:
$$\Hom_{U(V_2^{\epsilon})}(\pi_3,\pi_2)\ne 0 \text{ if and only if } \epsilon'=\epsilon(\frac{1}{2},\phi_2^{-1} \otimes  \phi_1 \otimes \chi^{-1} ,\psi_2^E).$$

\end{rem}

\begin{rem}In the proof of Theorem \ref{thm2}, the unique pair of representations $(\pi_3^{2},\pi_2^{2})\in \Pi_{\theta_2(\phi_1)} \times \Pi_{\theta(\phi_2)}$ such that $$\Hom_{U(V_2^{\epsilon})}(\pi_3^{2},\pi_2^{2})\ne0 $$ are obtained by the theta lifts from $U(W_1^{\epsilon'})$. However, if $\phi_2 = \chi^2$, the following proposition says that this is not true.
\end{rem}

\begin{prop}We retain the same notation as in Theorem \ref{thm2} except for $\phi_2=\chi^2$ so that $$\theta(\phi_2)=\chi \oplus \chi$$ and $$S_{\theta(\phi_2)}=(\ZZ/2\ZZ)b_1.$$\\ Then, $$\Hom_{U(V_2^{\epsilon})}(\pi_3^{1},\pi_2^{1})\ne0 \Leftrightarrow (\pi_3^{1},\pi_2^{1})=\big(\pi_{\theta_1(\phi_1)}(\eta_{1}^{\diamondsuit}),\pi_{\theta(\phi_2)}(\eta_1^{\heartsuit})\big)$$ where \begin{equation}\label{phi}\begin{cases}\eta_1^{\diamondsuit}(a_1)=1, \ \\ \eta_1^{\heartsuit}(b_1)=\epsilon(\frac{1}{2},\phi_1 \otimes \chi^{-3},\psi_{2}^E ).\end{cases}\end{equation}

\noindent Furthermore, $(\pi_3^{2}(\eta^{ \diamondsuit}_2),\pi_2^{2}(\eta^{\heartsuit}_2))\in \Pi_{\theta^{(2)}(\phi_1)} \times \Pi_{\theta(\phi_2)}$, suggested in the recipe of (B)$_2$ in this case, does not come from the theta lifts of $U(W^{\pm}_1)$.
\end{prop}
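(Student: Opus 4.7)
My plan is to rerun the see-saw machinery used in the proof of Theorem~\ref{thm2} with the small adjustments forced by $\phi_2=\chi^2$, and then to verify the second assertion by an explicit character computation in the (B)$_2$ recipe.

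For the existence half of the first claim, I set up the same see-saw diagram
\[
 \xymatrix{
  \U(W_1^{\epsilon'})  \times \U(W_1^{\epsilon'})  \ar@{-}[dr] \ar@{-}[d] & \U(V^{\epsilon}_{3}) \ar@{-}[d] \\
  \U(W_1^{\epsilon'}) \ar@{-}[ur] &  \U(V^{\epsilon}_2) \times \U(L_{1})}
\]
as in Theorem~\ref{thm2}. Since $\phi_2=\chi^2$, the identity $\phi_2^{-1}\otimes\phi_1\otimes\chi^{-1}=\phi_1\otimes\chi^{-3}$ collapses the (FJ)$_1$-sign to $\epsilon'=\epsilon(\frac{1}{2},\phi_1\otimes\chi^{-3},\psi_2^E)$, and combined with (\ref{par}) this forces $\epsilon=+1$ on the $\theta_1(\phi_1)$ side. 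The only difference from Theorem~\ref{thm2} is that nonvanishing of $\Theta_{\psi,V_2^{\epsilon},W_1^{\epsilon'}}(\pi_{\phi_2^{-1}}^{\vee})$ is now supplied by Theorem~\ref{Tae}(i) instead of (ii), but the rest of the existence argument is identical and produces a pair $(\pi_3^1,\pi_2^1)\in\Pi_{\theta_1(\phi_1)}\times\Pi_{\theta(\phi_2)}$ with nonzero Hom. The characters are then read off from (\ref{theta1}) and from Theorem~\ref{Tae}(iii): the appearance of $\epsilon'$ twice in the $a_1$-slot gives $\eta_1^\diamondsuit(a_1)=(\epsilon')^2=1$, while the identification $S_{\theta(\phi_2)}=S_{\phi_2}$ transports the character of $\pi_{\phi_2^{-1}}^{\vee}$ unchanged, yielding $\eta_1^\heartsuit(b_1)=\epsilon(\frac{1}{2},\phi_1\otimes\chi^{-3},\psi_2^E)$, matching (\ref{phi}).

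Uniqueness is by the same run-in-reverse argument as in Theorem~\ref{thm2}. Starting from any $(\pi_3^1,\pi_2^1)$ with nonvanishing Hom, Theorem~\ref{thm1}(i) writes $\pi_3^1$ as a theta lift of some $\sigma\in\Pi_{\phi_1}^{\epsilon'}$, and the see-saw identity transports the Hom into a nonvanishing space on $U(W_1^{\epsilon'})$ involving $\Theta_{\psi,W_1^{\epsilon'},V_2^{\epsilon}}(\pi_2^1)$, which in the $\chi^2$ case is automatically nonzero by Theorem~\ref{Tae}(i) and carries $L$-parameter $\chi^{-2}$. Applying (FJ)$_1$ to the generic parameter $\chi^{-2}\times\phi_1$ then pins down $\epsilon'=\epsilon(\frac{1}{2},\phi_1\otimes\chi^{-3},\psi_2^E)$ and identifies the unique contributing representations on both factors, forcing $\pi_3^1$ and $\pi_2^1$ to be exactly the ones just constructed.

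For the second assertion, I compute the value of the (B)$_2$ character $\eta_2^\diamondsuit$ on the generator $a_2$ coming from $\chi\boxtimes\textbf{S}_2$. Because $\theta(\phi_2)=\chi\oplus\chi$ has two equal summands, one obtains
\[
 \eta_2^\diamondsuit(a_2)=\epsilon\bigl(\tfrac{1}{2},(\chi\boxtimes\textbf{S}_2)\otimes(\chi\oplus\chi),\psi_{-2}^E\bigr)=\epsilon\bigl(\tfrac{1}{2},\chi^2\boxtimes\textbf{S}_2,\psi_{-2}^E\bigr)^2=+1.
\]
On the other hand, formula (\ref{theta2}) in Theorem~\ref{thm1}(ii) forces the character of \emph{any} representation in $\Pi_{\theta_2(\phi_1)}$ arising from theta lifting to satisfy $\theta_2(\eta)(a_2)=-1$. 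Since $+1\ne -1$, the (B)$_2$-prescribed representation $\pi_{\theta_2(\phi_1)}(\eta_2^\diamondsuit)$ lies outside the image of theta lifting from $U(W_1^{\pm})$, which gives the final assertion. The only slightly delicate point in the whole argument is recognising that this squaring phenomenon on the $a_2$-slot makes the (B)$_2$ recipe incompatible with theta lifting irrespective of the actual value of $\epsilon(\frac{1}{2},\chi^2\boxtimes\textbf{S}_2,\psi_{-2}^E)$; everything else is substitution into formulas already at hand.
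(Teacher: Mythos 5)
Your argument for the first half (existence and uniqueness of the pair with nonvanishing Hom, and the character formulas in \eqref{phi}) follows essentially the same see-saw route as the paper: force $\epsilon = 1$ from $\Pi_{\theta(\phi_2)} = \Pi^1_{\theta(\phi_2)}$, fix $\epsilon' = \epsilon(\tfrac{1}{2},\phi_1\otimes\chi^{-3},\psi_2^E)$ via (FJ)$_1$ after the identification $\phi_2^{-1}\otimes\phi_1\otimes\chi^{-1} = \phi_1\otimes\chi^{-3}$, and read off the characters from \eqref{theta1} and Theorem~\ref{Tae}(iii). Two small imprecisions here: $\Theta_{\psi,W_1^{\epsilon'},V_2^\epsilon}(\pi_2^1)$ carries $L$-parameter $\chi^2$, and it is its contragredient that carries $\chi^{-2}$ (which is what goes into (FJ)$_1$); and the nonvanishing of $\Theta_{\psi,W_1^{\epsilon'},V_2^\epsilon}(\pi_2^1)$ is not automatic for arbitrary $\pi_2^1$ but is precisely the constraint that pins $\pi_2^1$ down as the lift of the unique member of $\Pi_{\phi_2}^{\epsilon'}$. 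Neither affects the outcome.

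Your argument for the second half, however, is genuinely different from the paper's and is cleaner. The paper argues indirectly: it assumes some pair $(\Theta(\sigma),\Theta(\tau))$ of theta lifts has nonvanishing Hom, runs the see-saw, finds that (FJ)$_1$ forces $\epsilon' = \epsilon(\tfrac{1}{2},\phi_1\otimes\chi^{-3},\psi_2^E)$ while the $\theta_2$-branch of Theorem~\ref{thm1} forces $\epsilon\cdot\epsilon' = -\epsilon(\tfrac{1}{2},\phi_1\otimes\chi^{-3},\psi_2^E)$ with $\epsilon = 1$, a contradiction; then since the (B)$_2$ pair does have nonvanishing Hom, it cannot be a theta-lift pair. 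You instead compute the recipe character directly: because $\theta(\phi_2) = \chi\oplus\chi$ has two identical summands, the Bessel recipe gives
\[
\eta_2^{\diamondsuit}(a_2) = \epsilon\bigl(\tfrac{1}{2},(\chi\boxtimes\textbf{S}_2)\otimes(\chi\oplus\chi),\psi_{-2}^E\bigr) = \epsilon\bigl(\tfrac{1}{2},\chi^2\boxtimes\textbf{S}_2,\psi_{-2}^E\bigr)^2 = +1,
\]
while \eqref{theta2} forces every theta lift in $\Pi_{\theta_2(\phi_1)}$ to satisfy $\eta(a_2) = -1$. This is a direct character comparison that bypasses the see-saw contradiction entirely and makes the mechanism (the epsilon-factor squaring against the multiplicity-two summand) completely transparent. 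Both proofs are correct; yours is more economical and arguably more illuminating for this particular assertion.
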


\begin{proof}We first note that $z_{\theta(\phi_2)}$, the image of $-1\in GL_2(\mathbb{C})$ in $S_{\theta(\phi_2)}$, is $2b_1$ and so for every $\eta \in S_{\theta(\phi_2)}$, one has $\eta(z_{\theta(\phi_2)})=1$ and $\Pi_{\theta(\phi_2)}=\Pi^{1}_{\theta(\phi_2)}.$ Thus every nonzero $\pi_2^1 \in \Pi_{\theta(\phi_2)}$ is indeed in $ \Pi^{1}_{\theta(\phi_2)}.$\\ By Theorem \ref{thm1}, all element $\pi_3^1 \in \Pi_{\theta^{(1)}(\phi_1)} $ can be written $\pi_3^1=\Theta_{\psi, V_3^{\epsilon},W_1^{\epsilon'}}(\sigma)$ for some $\epsilon, \epsilon'\in \{\pm1\}$ such that $\epsilon \cdot \epsilon'=\epsilon(\frac{1}{2},\phi_1 \otimes \chi^{-3},\psi_{2}^E )$ and $\sigma \in \Pi^{\epsilon'}_{\phi_1}$.
\\Thus in order to have $\Hom_{U(V_2^{\epsilon})}(\pi_3^{1},\pi_2^{1})\ne 0$, there is no choice but to choose $\epsilon=1$ and so $\epsilon'$ should be $\epsilon(\frac{1}{2},\phi_1 \otimes \chi^{-3},\psi_{2}^E )$ when writing $\pi_3^{1}$ as the theta lift from $U(W_1^{\epsilon'})$.\\ With these choices of $\epsilon=1, \epsilon'=\epsilon(\frac{1}{2},\phi_1 \otimes \chi^{-3},\psi_{2}^E )$, the see-saw identity gives $$\Hom_{U(V_2^{\epsilon})}(\pi_3^{1},\pi_2^{1})\simeq \Hom_{U(W_1^{\epsilon'})}(\Theta_{\psi, W_1^{\epsilon'},V_2^{\epsilon}}(\pi_2^{1}) \otimes \omega_{\psi,W_1^{\epsilon'}},\sigma ).$$ For having $\Hom_{U(V_2^{\epsilon})}(\pi_3^{1},\pi_2^{1})\ne 0$, $\Theta_{\psi, W_1^{\epsilon'},V_2^{\epsilon}}(\pi_2^{1}) $ should be nonzero and in view of Theorem \ref{Tae} (ii), $\pi_2^1$ should be $\Theta_{V_2^{\epsilon}, W_{1}^{\epsilon'}}(\tau)$, where $\tau$ is the unique nonzero representation $\tau$ in $\Pi^{\epsilon'}_{\phi_2}$. (Note that $\Pi^{\epsilon'}_{\phi_2}$ is singleton.)
\\Since $\Hom^{\vee}_{U(W_1^{\epsilon'})}(\Theta_{\psi, W_1^{\epsilon'},V_2^{\epsilon}}(\pi_2^{1}) \otimes \omega_{\psi,W_1^{\epsilon'}},\sigma) \simeq \Hom_{U(W_1^{\epsilon'})}(\Theta^{\vee}_{\psi, W_1^{\epsilon'},V_2^{\epsilon}}(\pi_2^{1}) \otimes \sigma ,\omega_{\psi,W_1^{\epsilon'}} )$ and $$\Theta^{\vee}_{\psi, W_1^{\epsilon'},V_2^{\epsilon}}(\pi_2^{1})=\tau^{\vee}$$ has $L$-parameter $\phi_2^{-1}$, our choice $\epsilon'=\epsilon(\frac{1}{2},\phi_1 \otimes \chi^{-3},\psi_{2}^E )$ and (FJ)$_1$ implies $$\Hom_{U(W_1^{\epsilon'})}(\Theta^{\vee}_{\psi, W_1^{\epsilon'},V_2^{\epsilon}}(\pi_2^{1}) \otimes \sigma ,\omega_{\psi,W_1^{\epsilon'}} )\ne 0.$$ Thus we have constructed the unique pair $$\big(\Theta_{\psi, V_3^{\epsilon},W_1^{\epsilon'}}(\sigma) , \Theta_{\psi,V_2^{\epsilon}, W_{1}^{\epsilon'}}(\tau) \big) \in \Pi^{\epsilon}_{\theta^{(1)}(\phi_1)} \times \Pi^{\epsilon}_{\theta(\phi_2)} \text{ where } \epsilon=1, \epsilon'=\epsilon(\frac{1}{2},\phi_1 \otimes \chi^{-3},\psi_{2}^E )$$ which makes $\Hom_{U(V_2^{\epsilon})}(\pi_3^{1},\pi_2^{1})\ne0$ and using Theorem \ref{Tae} and Theorem \ref{thm1}, one can easily check that their characters of the component group are as in (\ref{phi}).\\

Next, we suppose that there is $(\sigma,\tau)\in \Pi^{\epsilon'}_{\phi_1} \times \Pi^{\epsilon''}_{\phi_2}$ such that $$\Hom_{U(V^{\epsilon}_{2})}\big(\Theta_{\psi, V_3^{\epsilon},W_1^{\epsilon'}}(\sigma) , \Theta_{V_2^{\epsilon}, W_{1}^{\epsilon''}}(\tau) \big)\ne 0.$$\\
By the see-saw identity, $\Hom_{U(W_1^{\epsilon'})}(\Theta_{\psi, W_1^{\epsilon'},V_2^{\epsilon}}(\Theta_{\psi,V_2^{\epsilon}, W_{1}^{\epsilon''}}(\tau)) \otimes \omega_{\psi,W_1^{\epsilon'}},\sigma )\ne0$ and so $$\Theta_{\psi, W_1^{\epsilon'},V_2^{\epsilon}}(\Theta_{\psi,V_2^{\epsilon}, W_{1}^{\epsilon''}}(\tau))\ne0.$$
If $\epsilon' \ne \epsilon'' $, then by Theorem \ref{Tae} (ii), we see that $\Theta_{V_2^{\epsilon},W_{1}^{\epsilon'}}(\Theta_{V_2^{\epsilon}, W_{1}^{\epsilon''}}(\tau))=0$ and so $\epsilon'=\epsilon''$. \\As in the above discussion, we also know that $\Pi_{\theta(\phi_2)}=\Pi^{1}_{\theta(\phi_2)}$ and so $\epsilon$ should be $1$. \\
Since $\Theta_{V_2^{\epsilon},W_{1}^{\epsilon'}}(\Theta_{V_2^{\epsilon}, W_{1}^{\epsilon'}}(\tau))=\tau \in \Pi^{\epsilon'}_{\phi_2}$ and $\Hom^{\vee}_{U(W_1^{\epsilon'})}(\tau \otimes \omega_{\psi,W_1^{\epsilon'}},\sigma )\simeq \Hom_{U(W_1^{\epsilon'})}(\tau^{\vee} \otimes \sigma, \omega_{\psi,W_1^{\epsilon'}} ),$  (FJ)$_1$ implies that $$\Hom_{U(W_1^{\epsilon'})}(\tau \otimes \omega_{\psi,W_1^{\epsilon'}},\sigma )\ne0 \Leftrightarrow \epsilon'=\epsilon(\frac{1}{2},\phi_1 \otimes \chi^{-3},\psi_{2}^E ).$$
However, by Theorem \ref{thm1}, $\epsilon \cdot \epsilon'=-\epsilon(\frac{1}{2},\phi_1 \otimes \chi^{-3},\psi_{2}^E )$ and so we have a contradiction. \\Thus we see that the recipe given in (B)$_2$ does not obtained by the theta lift from $U(W^{\epsilon'}_1)$.

\end{proof}

\section{\textbf{Proof of Theorem \ref{thm1}}}
In this section we prove Theorem \ref{thm1} using the $(FJ)_1$ and see-saw identity.\\

\noindent The first two statements on the existence of bijection and injection are immediate from Theorem \ref{theta} (ii).
\noindent To find the precise maps using the local Langlands correspondence, we consider the see-saw diagram: 
\[
 \xymatrix{
  \U(W_1^{\epsilon'})  \times \U(W_1^{\epsilon'})  \ar@{-}[dr] \ar@{-}[d] & \U(V^{\epsilon}_{3}) \ar@{-}[d] \\
  \U(W_1^{\epsilon'}) \ar@{-}[ur] &  \U(V^{\epsilon}_2) \times \U(L_{1})}.
\]
\\Recall that we are given an $L$-parameter $\phi$ for $U(W_1^{\pm})$ and $\pi=\pi(\phi,\eta)\in \Pi_{\phi}^{\epsilon'}$ where $\eta(a_1)=\epsilon'$. \\ Suppose $$\epsilon(\frac{1}{2},\phi \otimes \chi^{-3}, \psi_2^{E})=\epsilon \cdot \epsilon'.$$ By Theorem \ref{theta}, we know that $\Theta_{\psi,V_3^{\epsilon},W_1^{\epsilon'}}(\pi(\phi,\eta))$ is non-tempered and $\eta(a_1)=\epsilon'$, and $\theta_{1}(\eta)(a_1)=\epsilon$ and so the local theta correspondence would follow easily in this case.\\
 
On the other hand, we suppose $$\epsilon(\frac{1}{2},\phi \otimes \chi^{-3}, \psi_2^{E})=-\epsilon \cdot \epsilon'.$$ %Following the similar argument of (\cite{gs}, Lemma 12.5) for unitary group, one can find an irreducible tempered repreentation $\pi_0$ of $U(V_2^{\epsilon'})$ such that $\Hom_{U(V_2^{\epsilon'})}(\pi(\theta_{V_{3}^{\epsilon'},W_1^{\epsilon}}\big(\phi),\theta_{V_{3}^{\epsilon'},W_1^{\epsilon}}(\eta)),\pi_0\big)\ne0$. \\
%Then by the see-saw identity, one has $$\Hom_{U(W_1^{\epsilon})}\big(\theta_{W_1^{\epsilon},V_2^{\epsilon'}}(\pi_0)\otimes \omega_{W_1^{\epsilon},V_2^{\epsilon'}}, \sigma\big)\ne 0.$$ In particular, we know that $\theta_{W_1^{\epsilon},V_2^{\epsilon'}}(\pi_0)\ne0$ and so if we denote the $L$-parameter of $\theta_{W_1^{\epsilon},V_2^{\epsilon'}}(\pi_0)$ by $\phi_0$, then the $L$-parameter of $\pi_0$ is $\theta_{0}=\phi_0\chi^{-1} \oplus \chi$ by Theorem \ref{T:al-equal}.\\

\noindent We choose an $L$-parameter $\phi_0 \ne \chi^2$ of $U(W_1^{\epsilon'})$ such that $$\epsilon(\frac{1}{2},\phi_{0}^{-1} \otimes  \phi \otimes \chi^{-1} ,\psi_2^E)=\epsilon'.$$
Since $\phi, \phi_0$ are unitary, for $\pi(\phi_0,\eta_0)\in \Pi_{\phi_0}^{\epsilon'}$, we know that $$\Hom_{U(W_{1}^{\epsilon'})}(\pi(\phi_0,\eta_0) \otimes \omega_{\psi,W_1^{\epsilon'}}, \pi(\phi,\eta))=\Hom_{U(W_1^{\epsilon'})}(\pi^{\vee}(\phi_0,\eta_0)\otimes \pi(\phi,\eta),\omega_{\psi,W_1^{\epsilon'}})\ne 0$$ (the second equality follows from the $(FJ)_1$ because the $L$-parameter of $\pi^{\vee}(\phi_0,\eta_0)$ is $\phi_0^{-1}$.) \\
Then by the see-saw idenity, we have
$$\Hom_{U(V_{2}^{\epsilon})}\big(\Theta_{\psi,V_3^{\epsilon},W_1^{\epsilon'}}(\pi(\phi,\eta)) ,\Theta_{\psi,V_2^{\epsilon},W_1^{\epsilon'}}(\pi(\phi_0,\eta_0))\big) \ne 0.$$ In particular, $$\Theta_{\psi,V_2^{\epsilon},W_1^{\epsilon'}}(\pi(\phi_0,\eta_0))\ne 0$$ and by Theorem \ref{Tae} (i), it has $L$-parameter $$\phi_0 \cdot \chi^{-1} \oplus \chi$$ and denote its component group by $$(\ZZ/2\ZZ)b_1 \times (\ZZ/2\ZZ)b_2.$$ 

Note that the theta lift $\Theta_{\psi,V_3^{\epsilon},W_1^{\epsilon'}}(\pi(\phi,\eta))$ is supercuspidal and since $U(W_1^{\epsilon'})$ is compact, it is a tempered representation. Furthermore since $\Theta_{\psi,V_2^{\epsilon},W_1^{\epsilon'}}(\pi(\phi_0,\eta_0))$ is tempered, (B)$_2$ implies

$$\theta_{2}(\eta)(a_1)=\epsilon(\frac{1}{2},\phi_{0}^{-1} \otimes  \phi \otimes \chi^{-1} ,\psi_{-2}^E) \cdot \epsilon(\frac{1}{2},\phi \otimes \chi^{-3}, \psi_{-2}^{E}),$$ 
$$\theta_{2}(\eta)(a_2)=\epsilon(\frac{1}{2},\phi_0^{-1}\otimes \chi^{2} \boxtimes \textbf{S}_2,\psi_{-2}^E)\cdot \epsilon(\frac{1}{2},\textbf{S}_2,\psi_{-2}^E),$$ $$\theta^{\vee}_{\psi,V_2^{\epsilon},W_1^{\epsilon'}}(\eta_0)(b_1)=\epsilon(\frac{1}{2},\phi_{0}^{-1} \otimes  \phi \otimes \chi^{-1} ,\psi_{-2}^E) \cdot \epsilon(\frac{1}{2},\phi_0^{-1}\otimes \chi^{2} \boxtimes \textbf{S}_2,\psi_{-2}^E),$$ $$\theta^{\vee}_{\psi,V_2^{\epsilon},W_1^{\epsilon'}}(\eta_0)(b_2)=\epsilon(\frac{1}{2},\phi \otimes \chi^{-3}, \psi_{-2}^{E}) \cdot \epsilon(\frac{1}{2},\textbf{S}_2,\psi_{-2}^E),$$ where $\psi_{-2}^E(x)=\psi^{E}(-2x).$ \\

Recall that for an $L$-parameter $\varphi$ of $U(V_n)$ or $U(W_n)$, $$\epsilon(\frac{1}{2},\varphi ,\psi_{-2}^E) = \big(\omega_{E/F}(-1)\big)^{n}\cdot \epsilon(\frac{1}{2},\varphi ,\psi_{2}^E).$$
Thus we have $$\epsilon(\frac{1}{2},\phi_{0}^{-1} \otimes  \phi \otimes \chi^{-1} ,\psi_{-2}^E) = \omega_{E/F}(-1)\cdot \epsilon(\frac{1}{2},\phi_{0}^{-1} \otimes  \phi \otimes \chi^{-1} ,\psi_{2}^E),$$ $$\epsilon(\frac{1}{2},\phi \otimes \chi^{-3}, \psi_{-2}^{E})=\omega_{E/F}(-1) \cdot \epsilon(\frac{1}{2},\phi \otimes \chi^{-3}, \psi_{2}^{E}),$$ and so we see $$\theta_{2}(\eta)(a_1)=\epsilon' \cdot \epsilon(\frac{1}{2},\phi \otimes \chi^{-3}, \psi_{2}^{E})=\eta(a_1) \cdot \epsilon(\frac{1}{2},\phi \otimes \chi^{-3}, \psi_{2}^{E}).$$\\
 
On the other hand, from Theorem \ref{Tae}, $$\theta^{\vee}_{V_2^{\epsilon},W_1^{\epsilon'}}(\eta_0)(b_1)=\omega_{E/F}(-1)\cdot \theta_{V_2^{\epsilon},W_1^{\epsilon'}}(\eta_0)(b_1)=\omega_{E/F}(-1) \cdot \epsilon(\frac{1}{2},\phi_{0}^{-1} \otimes  \phi \otimes \chi^{-1} ,\psi_{2}^E)=\epsilon(\frac{1}{2},\phi_{0}^{-1} \otimes  \phi \otimes \chi^{-1} ,\psi_{-2}^E)$$ and so we have $$\epsilon(\frac{1}{2},\phi_0^{-1}\otimes \chi^{2} \boxtimes \textbf{S}_2,\psi_{-2}^E)=1.$$ Since $$\theta_{\psi,V_2^{\epsilon},W_1^{\epsilon'}}(\eta_0)(b_1)\cdot \theta_{\psi,V_2^{\epsilon},W_1^{\epsilon'}}(\eta_0)(b_2)=\epsilon'$$ we see that
$$\theta^{\vee}_{\psi,V_2^{\epsilon},W_1^{\epsilon'}}(\eta_0)(b_2)=\omega_{E/F}(-1)\cdot \theta_{\psi,V_2^{\epsilon},W_1^{\epsilon'}}(\eta_0)(b_2)=\omega_{E/F}(-1)\cdot \epsilon \epsilon' = -\omega_{E/F}(-1)\cdot \epsilon(\frac{1}{2},\phi \otimes \chi^{-3}, \psi_{2}^{E})$$
 and it forces $$\epsilon(\frac{1}{2},\textbf{S}_2, \psi_{-2}^{E})=-1.$$ From these things, one can deduce $$\theta_{2}(\eta)(a_2)=-1$$ as we claimed.

\section{Ichino-Ikeda conjecture for the non-tempered case}
\noindent In this section, we give an analog of the Ichino-Ikeda conjecture for some non-tempered case using Theorem \ref{thm2}. To state our result, we review the notion of regularised local period introduced in \cite{Haan}.

\noindent  Let $E/F$ be a quadratic extension of number fields with $\mathbb{A}_E, \mathbb{A}_F$ their adele rings. \\
For a place $v$ of $F$, let $F_v$ be the completion of $F$ at $v$ and $E_v=E\otimes_F F_v$. For a (skew) hermitian space $V$ over $E$ and a place $v \in F$, consider $V_{v}:=V \otimes_F F_v$ as a (skew) hermitian space over $E_v$. Then we have a decomposition $$\mathbb{V}:=V \otimes_{F}\mathbb{A}_F \simeq \otimes_v V_{v}^{\epsilon_v} $$ where $v$ runs over all place of $F$ and $\epsilon_v:=\begin{cases}\epsilon(V_v) & \text{ for } v \text{ which remains prime in } E, \\ 1 & \text{ for } v \text{ which splits in } E \end{cases}. $

 With this decomposition, we have $$U(V)(\mathbb{A}_F)\simeq \Pi_v U(V^{\epsilon_v}_v)(F_v)$$ and if $\pi$ is an automorphic representation of $U(V)(\mathbb{A}_F)$, we also have its decomposition $\pi \simeq \otimes_v \pi_v$ where $\pi_v$ is an irreducible admissible representation of $U(V_v^{\epsilon_v})(F_v)$. 

Let $V_2$ be a 2-dimension hermitian space over $E$, $W_1$ be a 1-dimension skew-hermitian space over $E$ and $L$ be a hermitian line over $E$ with a form $\N_{E/F}$. Define $V_3:=V_2 \oplus L$. \\Note that $\epsilon(V_{3,v})=\epsilon(V_{2,v})$ for all $v$, because $\epsilon(L_v)=1$ for all $v$.

Let us see the following see-saw diagram 
\[
 \xymatrix{
  \U(W_1)  \times \U(W_1)  \ar@{-}[dr] \ar@{-}[d] & \U(V_{3}) \ar@{-}[d] \\
  \U(W_1) \ar@{-}[ur] &  \U(V_2) \times \U(L_{1})}.
\]
In this diagram, we consider the three local and global theta correspondence : 
\begin{enumerate}
\item $U(V_3) \times U(W_1)$ relative to the pair of characters $(\psi,\chi, \chi^3)$;
\item $\U(V_2) \times U(W_1)$ relative to the pair of characters $(\psi,\chi, \chi^2);$
\item $\U(L_{1}) \times \U(W_1)$ relative to the pair of characters $(\psi,\chi, \chi).$
\end{enumerate}
where $\psi,\chi$ are those we defined in Section \ref{not}.
For what follows, we suppress these choices from the notation.

Let $\sigma=\otimes_v \sigma_v $ be an automorphic character of $U(W_1)(\mathbb{A}_F)$ and $\mathbb{I}=\otimes_v \mathbb{I}_v$ be the trivial character of $U(W_1)(\mathbb{A}_F)$. Let $\pi_3:=\Theta_{V_{3},W_{1}}(\overline{\sigma})$ be the non-zero global theta lifts of $\overline{\sigma}$ to $U(V_3)(\mathbb{A}_F)$ and $\pi_2:=\Theta_{V_{2},W_{1}}(\overline{\mathbb{I}})$ be the non-zero global theta lifts of $\overline{\mathbb{I}}$ to  $U(V_2)(\mathbb{A}_F)$. (see Definition 2.2 in \cite{Haan})\\ For all place $v$ of $F$, if we set \begin{equation}\label{app}\pi_{3,v}:=\theta_{V_{3,v}^{\epsilon_v},W_{1,v}^{\epsilon'_v}}(\overline{\sigma_v}) ,\ \pi_{2,v}:=\theta_{V_{2,v}^{\epsilon_v},W_{1,v}^{\epsilon'_v}}(\overline{\mathbb{I}_v}) \text{ where } \epsilon_v=\epsilon(V_{3,v})=\epsilon(V_{2,v}) \text{ and } \epsilon'_v=\epsilon(W_{1,v}),\end{equation}
then by Howe duality, one has $\pi_{i} \simeq \otimes_v \pi_{i,v}$ for $i=2,3.$
\\Note that the two maps$$\theta_{V_{3,v},W_{1,v}} : \overline{\sigma_v} \otimes \omega_{V_{3,v},W_{1,v}}   \to \Theta_{V_{3,v}^{\epsilon_v},W_{1,v}^{\epsilon'_v}}(\overline{\sigma_v})\ ,
 \ \theta_{V_{2,v},W_{1,v}} : \overline{\mathbb{I}_v}  \otimes \omega_{V_{2,v},W_{1,v}} \to \Theta_{V_{2,v}^{\epsilon_v},W_{1,v}^{\epsilon'_v}}(\overline{\mathbb{I}_v})$$ are $U(V_{3})(F_v) \times U(W_{1})(F_v)$ and $U(V_{2})(F_v) \times U(W_{1})(F_v)$ equivariant surjective maps and by Theorem \ref{Tae} and \ref{theta}, the big theta lifts $\Theta_{V_{3,v}^{\epsilon_v},W_{1,v}^{\epsilon'_v}}(\overline{\sigma_v})$,
$\Theta_{V_{2,v}^{\epsilon_v},W_{1,v}^{\epsilon'_v}}(\overline{\mathbb{I}_v})$ are both irreducible. Thus we can define the local inner products $\mathcal{B}_{\pi_{i,v}}$ on $\pi_{i,v}$ for $i=2,3$ as follows:

For $\varphi_{1,v}^{i},\varphi_{2,v}^{i} \in \mathcal{S}(\mathbb{X}_{V_{i},W_{1}})(F_v)$, $f^{3}_{1,v},f^{3}_{2,v}\in \sigma_v$ and $f^{2}_{1,v},f^{2}_{2,v}\in \mathbb{I}_v$, let
$$\mathcal{B}_{\pi_{3,v}}(\theta_{v,V_{3},W_{1}}(\overline{f_{1,v}^{3}},\varphi_{1,v}^{3}),\theta_{v,V_{3},W_{1}}(\overline{f_{2,v}^{3}},\varphi_{2,v}^{3})):=\int_{U(W_1)(F_v)}\mathcal{B}_{\omega_{V_3,W_1}}(\omega_v(h_v)\cdot \varphi_{1,v}^3,\varphi_{2,v}^3)\cdot \mathcal{B}_{\sigma_v}(\sigma_v(h_v)\cdot f_{1,v}^3,f_{2,v}^3)dh_v$$ and 
$$\mathcal{B}_{\pi_{2,v}}(\theta_{v,V_{2},W_{1}}(\overline{f_{1,v}^{2}},\varphi_{1,v}^{2}),\theta_{v,V_{2},W_{1}}(\overline{f_{2,v}^{2}},\varphi_{2,v}^{2})):=\int_{U(W_1)(F_v)}\mathcal{B}_{\omega_{V_2,W_1}}(\omega_v(h_v)\cdot \varphi_{1,v}^2,\varphi_{2,v}^2)\cdot \mathcal{B}_{\mathbb{I}_v}(\mathbb{I}_v(h_v)\cdot f_{1,v}^2,f_{2,v}^2)dh_v$$ where $\mathcal{B}_{\omega_{V_i,W_1}}$ for $i=2,3$ are the local inner products of the Weil representations and $\mathcal{B}_{\sigma_v}$, $\mathcal{B}_{\mathbb{I}_v}$ are the local inner products of $\sigma_v, \mathbb{I}_v$ respectively.\\
With these choices of local inner products, Haan \cite{Haan} defined the regularised local period $\mathcal{P}_v^{reg}$ as follows; 

For $f_{3,v}\in \pi_{3,v}$ and $f_{2,v}\in \pi_{2,v}$, let
$$ {\mathcal{P}_v^{reg}(f_{3,v},f_{2,v})}:=c_v\cdot \lim_{s\to 0}\frac{\zeta_v(2s)}{L_v(s,BC(\pi_{2,v})\otimes\chi_v)}\cdot \int_{U(2)_v}\mathcal{B}_{\pi_{3,v}}(g_{v}\cdot f_{3,v},f_{3,v})\cdot \mathcal{B}_{\pi_{2,v}}(g_{v}\cdot f_{2,v},f_{2,v})\cdot \Delta(g_v)^s dg_{v}.$$
(here, $c_v$ is a non-zero constant for each $v$ and $\Delta(g_v)$ is some determinant map appearing in the doubling method. For the precise definition of $c_v$ and $\Delta(g_v)$, we refer the reader to Section 3.2 in \cite{Haan}.)

By \cite[Remark 4.1]{Haan}, $\mathcal{P}_v^{reg}\ne 0$ is equivalent to $\Theta_{W_{1,v},L_{1,v}}(\overline{\sigma_v})\ne 0.$
From this observation, we have the following theorem.

\begin{thm} Using the notation as in the (\ref{app}), for non-archimedean place $v$, we have $$\Hom_{U(V_2)(F_v)}(\pi_{3,v},\pi_{2,v})\ne0  \Leftrightarrow \mathcal{P}_v^{reg} \ne 0$$ 
\end{thm}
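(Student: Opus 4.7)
My plan is to reduce the left-hand $\Hom$ space, via the same see-saw that drives the Main Theorem, to a single theta-lift non-vanishing statement between $\U(W_1)$ and $\U(L_1)$, and then invoke Remark 4.1 of \cite{Haan} directly. The relevant see-saw is
\[
\xymatrix{
\U(W_1^{\epsilon'_v}) \times \U(W_1^{\epsilon'_v}) \ar@{-}[dr] \ar@{-}[d] & \U(V_{3}^{\epsilon_v}) \ar@{-}[d] \\
\U(W_1^{\epsilon'_v}) \ar@{-}[ur] & \U(V_{2}^{\epsilon_v}) \times \U(L_{1,v})
}
\]
with Weil representations for $(\U(V_3),\U(W_1))$, $(\U(V_2),\U(W_1))$ and $(\U(L_1),\U(W_1))$ attached to the splitting character pairs $(\chi,\chi^3)$, $(\chi,\chi^2)$ and $(\chi,\chi)$ respectively.

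First, since $\pi_{2,v}$ is admissible, one has $\Hom_{U(V_2^{\epsilon_v})(F_v)}(\pi_{3,v},\pi_{2,v}) \simeq \Hom_{U(V_2^{\epsilon_v})(F_v)}(\pi_{3,v} \otimes \pi_{2,v}^{\vee},\CC)$. Applying the see-saw identity in the exact form displayed after the statement of the Main Theorem, with $\sigma$ replaced by $\overline{\sigma_v}$ and the formula's $\pi_2$ replaced by $\pi_{2,v}^{\vee}$, this becomes
\[
\Hom_{U(W_1^{\epsilon'_v})(F_v)}\bigl(\Theta_{W_1, V_2}(\pi_{2,v}) \otimes \omega_{W_1, L_1},\, \overline{\sigma_v}\bigr).
\]
Because $\pi_{2,v} = \theta_{V_2, W_1}(\overline{\mathbb{I}_v})$ is irreducible by Theorem \ref{Tae}(iv), Howe duality for the pair $(\U(V_2),\U(W_1))$, which is unconditional at non-archimedean places by \cite{gt1,gt2}, forces $\Theta_{W_1, V_2}(\pi_{2,v}) = \overline{\mathbb{I}_v} = \mathbb{I}_v$ (the latter equality because complex conjugation is trivial on the trivial character). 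Tensoring with $\mathbb{I}_v$ is neutral, so the Hom reduces to $\Hom_{U(W_1^{\epsilon'_v})(F_v)}(\omega_{W_1, L_1}, \overline{\sigma_v})$, which by the defining property of the big theta lift is canonically isomorphic as a $\U(L_{1,v})$-module to $\Theta_{L_1, W_1}(\overline{\sigma_v})$. Hence the original Hom space is non-zero precisely when $\Theta_{L_1, W_1}(\overline{\sigma_v}) \ne 0$.

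The proof closes by quoting Remark 4.1 of \cite{Haan}, which identifies this last non-vanishing with $\mathcal{P}_v^{\mathrm{reg}} \ne 0$. The only real care required is the bookkeeping of the three splitting character pairs so that the two theta lifts entering the see-saw are compatibly normalised with those defining $\pi_{3,v}$ and $\pi_{2,v}$, together with a careful separation of contragredient and complex conjugation on $\sigma_v$. The non-archimedean hypothesis enters precisely through the appeal to Howe duality and through the domain of applicability of the regularised-period construction of \cite{Haan}.
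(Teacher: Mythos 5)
Your proposal takes a genuinely different route from the paper's proof. The paper splits into two cases: at split places it runs a $\GL$-group see-saw and cites Theorem 17.2 of \cite{Gan2} together with a $\GL$-theta argument to show that both $\Hom_{U(V_2)(F_v)}(\pi_{3,v},\pi_{2,v})\neq 0$ and $\Theta_{W_{1,v},L_{1,v}}(\overline{\sigma_v})\neq 0$ always hold; at inert places it does not use the see-saw at all but instead quotes Remark \ref{rem1} (itself a consequence of Theorem \ref{thm2}) and Theorem \ref{Te}(ii), and matches the two resulting local root number conditions. You instead perform a single see-saw reduction, uniform over all non-archimedean $v$, and never invoke Theorem \ref{thm2}. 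What this buys you is a cleaner derivation that does not route through epsilon-factor bookkeeping; the cost is that you must justify the "going back" step $\Theta_{W_1,V_2}(\pi_{2,v}) = \overline{\mathbb{I}_v}$. Howe duality (injectivity and irreducibility of the small theta) does not by itself deliver this; the correct statement is that the double lift returns the MVW involute (equivalently, one uses the $L$-parameter descriptions in Theorem \ref{Tae} and Theorem \ref{Te} in both directions), and it happens to be trivially true here because the input is the trivial character. You should also be explicit that at split places the parameter-based Theorems \ref{Te} and \ref{Tae} are being read through the identification of $U$ with $\GL$, since as written they are framed for conjugate self-dual parameters; the paper avoids this concern by giving the split case its own $\GL$-theoretic argument. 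Finally, your last reduction gives $\Theta_{L_1,W_1}(\overline{\sigma_v})$ as a $U(L_1)$-module up to a contragredient (the isotypic-quotient argument yields the dual), and the paper's criterion from \cite[Remark~4.1]{Haan} is stated with the opposite subscript order $\Theta_{W_{1,v},L_{1,v}}(\overline{\sigma_v})$; the non-vanishing conditions agree by theta dichotomy, but you should make that matching explicit rather than leave it implicit.
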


\begin{proof}If $v$ is split, all relevent groups are general linear groups and so we consider the following see-saw diagram:
\[
 \xymatrix{
  \GL(W_1)  \times \GL(W_1)  \ar@{-}[dr] \ar@{-}[d] & \GL(V_{3}) \ar@{-}[d] \\
  \GL(W_1) \ar@{-}[ur] &  \GL(V_2) \times \GL(L_{1})}
\]
Thus by the see-saw identity, $$\Hom_{GL(V_2)(F_v)}(\pi_{3,v},\pi_{2,v})\simeq \Hom_{GL(W_1)}(\overline{\mathbb{I}_v} \otimes \omega_{v,W_1,L_1},\overline{\sigma_v}).$$ From Theorem 17.2 in \cite{Gan2}, $$\Hom_{GL(W_1)}(\overline{\mathbb{I}_v} \otimes \omega_{v,W_1,L_1},\overline{\sigma_v})\ne 0$$ and so one has $$\Hom_{GL(V_2)(F_v)}(\pi_{3,v},\pi_{2,v})\ne0.$$ 
On the other hand, if one follows the similar argument as in Proposition 2.6.1 in \cite{Ge2}, one can have $\Theta_{v,W_1,L_1}(\overline{\sigma_v})\ne0$. (Indeed, it is known that the theta lift from $GL_n(F_v)$ to $GL_n(F_v)$ is just taking a representation to its contragradient representation.) Thus we see that the theorem holds for split places $v$.

Next, suppose that $v$ remains prime in $E$. Then by Remark \ref{rem1}, we have $$\Hom_{U(V_2)(F_v)}(\pi_{3,v},\pi_{2,v})\ne0 \Leftrightarrow \epsilon(W_{1,v})=\epsilon(\frac{1}{2},\sigma^{-1}\otimes \chi^{-1} ,\psi_2^E).$$
On the other hand, from Theorem \ref{Te}, one has $$\Theta_{W_{1,v},L_{1,v}}(\overline{\sigma_v})\ne0 \Leftrightarrow \epsilon(W_{1,v})=\epsilon(\frac{1}{2},\sigma^{-1}\otimes \chi^{-1} ,\psi_2^E).$$ Thus we verified our claim for primes $v$ which are inert in $E$.

\end{proof}

\end{document}